\pgfplotsset{compat=1.9}
\newtheorem{lemma}{Lemma}[section]
\newtheorem{example}{Example}[section]
\newtheorem{theorem}{Theorem}[section]
\newtheorem{definition}{Definition}[section]
\newtheorem{remark}{Remark}[section]
\newtheorem{problem}{Problem}[section]
\newtheorem{question}{Question}[section]
\def\Sh{\mathrm{Sh}\,}
\def\Imm{\mathrm{Im}\,}
\def\Span{\mathrm{Span}}
\def\gcd{\mathrm{gcd}}
\def\charr{\mathrm{char}\,}
\newcounter{pict}
\begin{document}

\begin{center}
{\Large Monomial Rota---Baxter operators of weight zero and averaging operators on the polynomial algebra}

\smallskip

Artem Khodzitskii \\
Novosibirsk State University
\end{center}

\begin{abstract}
Starting with the work S.H.~Zheng, L.~Guo and M.~Rosenkranz (2015),
Rota---Baxter operators are studied on the polynomial algebra.
Injective Rota---Baxter operators of weight zero on $F[x]$ were described in 2021.
We classify the following classes of monomial Rota---Baxter operators of weight zero on the polynomial algebra $F[x,y]$ and its augmentation ideal $F_0[x,y]$: 
1) non-increasing in degree that do not contain monomials in the kernel,
2) mapping all monomials to themselves with a~coefficient.
In the context of these sets of operators, we show how one may 
define a monomial averaging operator by a given RB-operator and vice versa.

\medskip
{\it Keywords}:
Rota---Baxter operator, averaging operator, polynomial algebra.

MSC code: 16W99
\end{abstract}

\section{Introduction}
Rota---Baxter operators are an algebraic analogue of the integral operator.
They are widely used in various areas of mathematics and physics,
including the study of Yang---Baxter equations, 
combinatorics, and quantum field theory~\cite{GuoMonograph}.
Rota---Baxter operators were first defined by G.~Baxter in 1960~\cite{Baxter}.
However, the relation defining Rota---Baxter operators 
had been already appeared in the works of 
F.G.~Tricomi~\cite{Tricomi} and M.~Cotlar~\cite{Cotlar}.

\begin{definition}
A linear operator $R$ on an algebra $A$ defined over a field $F$ 
is called Rota---Baxter operator (RB-operator, for short)
if the following relation
\begin{gather}\label{RBO}
R(a)R(b) = R\big(R(a)b + a R(b) + \lambda a b\big)
\end{gather}
holds for all $a, b \in A$.
Here $\lambda\in F$ is a fixed scalar called a weight of $R$.
\end{definition}

When $\lambda = 0$, the identity~\eqref{RBO} is a generalization of the integration by parts formula.
The most active in the study of Rota---Baxter operators in 1960--1990s was G.-C.~Rota~\cite{Rota}.
In the 1980s, Rota---Baxter operators were rediscovered in the context 
of the classical and modified Yang---Baxter equations
from mathematical physics~\cite{BelaDrin82,Semenov83}.
Recently, Rota---Baxter operators have been defined on different algebraic structures: on groups~\cite{Guo2020}, Hopf algebras~\cite{Goncharov2020}, and coalgebras~\cite{Coalgebras}.

Along with Rota---Baxter operators,
averaging operators are also of particular interest.

\begin{definition}
A linear operator $T$ on an algebra $A$ defined over a field $F$
is called averaging operator if the following relations hold for all $a,b \in A$:
\begin{gather}\label{Averaging}
T(a)T(b) = T(T(a)b) = T(a T(b)).
\end{gather}
\end{definition}

Averaging operators were first introduced by O.~Reynolds~\cite{Reynolds} 
in the context of turbulence theory.
At the beginning of the 20th century, J. Kamp\'{e} de F\'{e}riet, who noted the importance of  averaging operators, studied them along with Reynolds operators for over 30 years~\cite{KampeDeFeriet}.
Averaging operators have also been studied in functional analysis~\cite{RotaAvOp};
a~connection between averaging operators and the conditional mean was found in~\cite{Moy}. 
In~\cite{Bong}, N.H.~Bong studied operators that are simultaneously 
Rota---Baxter and averaging operators.
More information about averaging operators can be found in~\cite{PeiGuoAveraging}.

In this paper, we study the connection between averaging operators
and Rota---Baxter operators on a polynomial algebra.
We focus on the case of two variables.
We consider and get partial classification of monomial RB-operators of weight zero and 
averaging operators on the algebras $F[x,y]$ and~$F_0[x,y]$, 
where $F_0[Z]$ denotes the augmentation ideal in $F[Z]$.

The active study of RB-operators on the polynomial algebra was started
with the work of S.H.~Zheng, L.~Guo, and M.~Rosenkranz in 2015~\cite{Monom2}.
Injective monomial RB-operators of weight zero on $F[x]$ were described there.
The monomial condition of an operator means that each monomial is mapped to
some monomial with a~coefficient from the ground field~$F$.
In 2016, H.~Yu described monomial RB-operators of any weight~$\lambda$ on $F[x]$~\cite{Monom}.
In 2020, V.Y.~Gubarev classified monomial RB-operators on $F_0[x]$~\cite{MonomNonunital}.

In~\cite{GubPer}, the conjecture of S.H.~Zheng, L.~Guo, and M.~Rosenkranz
on the description of injective (not necessarily monomial)
RB-operators on $F[x]$, where $F$ has characteristic zero, was confirmed. 
In~\cite{ReprRB,ReprRB2,ReprRB3}, finite-dimensional irreducible representations 
of certain Rota---Baxter algebras on $F[x]$ were described.

Although the examples of RB-operators on $F[x,y]$ had already appeared~\cite{Ogievetsky,Viellard-Baron},
a~systematic study of monomial Rota---Baxter operators of nonzero weight on the algebra $F[x,y]$ began in~\cite{Khodzitskii}.
Their connection with monomial homomorphic averaging operators was noted.
In particular, the author described in~\cite{Khodzitskii} monomial Rota---Baxter operators of nonzero weight on $F[x,y]$, which come from monomial homomorphic averaging operators.
In the current work, we will see that a connection between monomial RB-operators of weight zero and averaging operators is much deeper.

The idea to study monomial linear operators on a polynomial algebra is quite natural.
For example, monomial derivations were used to construct counterexamples
to the Hilbert's 14th problem. Monomial derivations on $F[X]$ have been studied 
in a series of papers~\cite{NowZiel,Ollagnier,Kitazawa,Essen}.
Note that if an RB-operator is invertible, then its formal inverse is a~derivation.

The paper is organized as follows.

In \S2, we consider the basic properties of RB-operators and averaging operators.

In \S3, we suggest a~method for constructing an RB-operator of weight zero 
and an averaging operator on the polynomial algebra $F[X]$ ($F_0[X]$) by a given monomial operator~$S$ satisfying corresponding conditions (see Lemmas~\ref{RB_by_lin_op_lemma} and~\ref{Averaging_by_lin_op_lemma}).
Further, we state problems (Problem~\ref{MainProblem} and~\ref{MainProblem2})
about the classification of monomial RB-operators~$R$ having the form 
$$
R(z) = \alpha_z T(z), \,\, z \in M(X)\,(M_0(X)),\,\, \alpha_z \in F, 
$$
where $T$ is a fixed monomial averaging operator, and vice versa.
Such formulation allows us to reduce the problems to solving recurrence relations on $\alpha_z$.
The similar problem stated for monomial RB-operators of nonzero weight on $F[x,y]$ coming from homomorphic averaging operators was studied in~\cite{Khodzitskii}.
We post a question (Question~\ref{AdditionalProblem}): Do we obtain all Rota---Baxter operators or averaging operators in such a manner?
The next paragraphs are devoted to consideration of this question.

In \S4, we describe monomial averaging operators on $F[x]$ and $F_0[x]$ (Theorem~\ref{Averaging_on_F[x]_and_F0[x]}).
On $F_0[x]$, we indeed obtain all monomial RB-operators by averaging ones, and vice versa. On $F[x]$, this is not so, since the unit may lie in the image of an averaging but not an Rota---Baxter operator. However, this property is the only source of the difference between the set of all monomial averaging operators and the set of averaging operators coming from monomial RB-operators.

In \S5, we describe monomial non-increasing in degree RB-operators of weight zero and averaging operators on $F_0[x,y]$ that do not contain monomials in the kernel (Theorems~\ref{RB_not_incr_and_ker=0} and~\ref{Averaging_not_incr_and_ker=0}). 
We consider a~question analogous to~\ref{AdditionalProblem} and 
find that the described classes coincide up to the coefficients.

In \S6, we describe all monomial RB-operators of weight zero and averaging operators
on $F[x,y]$ and $F_0[x,y]$ that map each monomial to itself with some coefficient from~$F$ (Theorems~\ref{AvOp_Case3.1_theorem} and~\ref{AvOp_Case3.1_theorem_averaging}).
Such sets of operators have no significant differences on $F_0[x, y]$ 
and coincide up to the coefficients.
On $F[x, y]$, the monomial averaging operators are defined by the same formulas as on $F_0[x, y]$, 
taking into account the presence of a unit in algebra $F[x, y]$.
However, a~similar set of RB-operators contains only the zero operator.

\section{Preliminaries}

A~set of variables we denote by $X$,
where $X = \{ x_i \mid i \in I\}$ and $I \neq \emptyset$ is a set of indexes.
We also use the notation $F_0[X] = (X)$
for the ideal in $F[X]$ generated by $X$
and $F^* = F \setminus \{0\}$.

\begin{definition}
Given $X \neq \emptyset$.
The set of monomials of $F[X]$ is defined as
$$
M(X) = 
\bigg\{ 
\prod_{i \in I} x_i^{\beta_i} \mid 
x_{\alpha} \in X, \, \beta_i \geqslant 0
\bigg\}.
$$
There are only finitely many nonzero scalars $\beta_i$
in the product $\prod_{i \in I} x_i^{\beta_i}$.

For the algebra $F_0[X]$ we define the set $M_0(X)$ as a~subset of $M(X)$ 
of elements $\prod_{i \in I} x_i^{\beta_i}$
satisfying the condition $\sum\limits_{i \in I} \beta_i > 0$.
\end{definition}

\begin{definition}[\!\!\cite{Monom2}]
Given $X \neq \emptyset$.
A linear operator~$L$ on $F[X]$ ($F_0[X]$) is called monomial
if for any $t \in M(X)$ ($M_0(X)$) there exist
$z_t \in M(X)$ ($M_0(X)$) and $\alpha_t \in F$
such that $L(t) = \alpha_t z_t$.
\end{definition}

\begin{definition}
Let $X$ be a nonempty set and let $L$ be a monomial operator on $F[X]$.
Define the set
$$
\Sh L = 
\{ z \in M(X) \mid 
L(z) = \alpha w, \, \alpha \in F^*, \,
z \neq w \in M(X)\}.
$$
We define $\Sh L$ for the algebra $F_0[X]$ similarly.
\end{definition}

We give the following well-known property of the Rota---Baxter operators.

\begin{lemma}\label{ker(R)IsIm(R)Module}
Let $R$ be an RB-operator of weight zero on an algebra $A$.
Then $\Imm R\cdot\ker R$, 
$\ker R \cdot\Imm R \subseteq \ker R$.
\end{lemma}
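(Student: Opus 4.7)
The statement is an immediate algebraic consequence of the defining identity~\eqref{RBO} with $\lambda=0$, so my plan is just to set up the correct substitution and read off the conclusion.

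The plan is to verify the two inclusions by a direct substitution into the weight-zero Rota---Baxter identity. To show $\Imm R \cdot \ker R \subseteq \ker R$, I would pick an arbitrary element of the form $R(a)\cdot b$ with $a \in A$ and $b \in \ker R$, and then apply the identity
\[
R(a)R(b) = R\bigl(R(a)b + a R(b)\bigr)
\]
to this pair $(a,b)$. Since $R(b)=0$, the left-hand side vanishes and the term $aR(b)$ on the right disappears, leaving $0 = R(R(a)b)$. Hence $R(a)b \in \ker R$, which is exactly the first inclusion.

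For the second inclusion $\ker R \cdot \Imm R \subseteq \ker R$, I would run the symmetric argument: pick $b \in \ker R$ and $a \in A$, and apply~\eqref{RBO} to the pair $(b,a)$ to obtain
\[
R(b)R(a) = R\bigl(R(b)a + b R(a)\bigr).
\]
Again using $R(b)=0$, both the left-hand side and the summand $R(b)a$ vanish, giving $R(bR(a))=0$, i.e.\ $bR(a) \in \ker R$.

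There is no genuine obstacle here: the argument is a one-line substitution in each direction, and the only thing worth being careful about is that the lemma is stated for the product $\Imm R \cdot \ker R$ as a subset of $A$ (not as a subset of $\Imm R$), so no further structural claim needs to be verified beyond membership in $\ker R$. The proof would take no more than a few lines.
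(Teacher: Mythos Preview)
Your argument is correct and is exactly the standard one-line verification; the paper itself states this lemma as a well-known property without proof, so there is nothing to compare beyond noting that your direct substitution into~\eqref{RBO} is precisely the expected justification.
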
 

\begin{lemma} [\!\!\cite{Cao}] \label{closure_and_modules_lemma}
Let $T$ be an averaging operator on an algebra $A$.
Then the following statements hold:

a) $\Imm T$ is a subalgebra in $A$.

b) $\Imm T\cdot\ker T$, $\ker T\cdot\Imm T\subseteq \ker T$.

\noindent If additionally $A = F[X]$ ($F_0[X]$), 
where $X \neq \emptyset$ and $T$ is monomial, then

c) $\Imm T\setminus\{0\}\cdot \Sh T$, 
$\Sh T\cdot \Imm T\setminus\{0\} \subseteq \Sh T$.
\end{lemma}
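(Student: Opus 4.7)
The plan is to derive (a) and (b) as immediate algebraic consequences of the defining averaging identity~\eqref{Averaging}, requiring neither the monomial hypothesis nor the polynomial structure of $A$. Part (c) is then isolated as the only portion of the lemma that uses $A = F[X]$ (or $F_0[X]$) together with the monomial assumption on $T$, and draws on the cancellativity of the monoid $M(X)$.

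For (a), taking arbitrary $T(a), T(b) \in \Imm T$, I would simply read off from~\eqref{Averaging} the identity $T(a)T(b) = T(T(a)b)$, which exhibits the product as an element of $\Imm T$. For (b), given $T(a) \in \Imm T$ and $c \in \ker T$, the same identity yields $T(T(a)c) = T(a)T(c) = 0$, placing $T(a)c \in \ker T$; the symmetric inclusion follows from the second equality in~\eqref{Averaging}.

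For (c), since $T$ is monomial, $\Imm T$ is spanned by monomials of the form $\beta u$ with $T(s) = \beta u$, $\beta \in F^*$, $s \in M(X)$, so it suffices to verify the closure statement on these monomial generators. Fix such a $u$ and take $z \in \Sh T$, so that $T(z) = \alpha w$ with $\alpha \in F^*$ and $z \neq w \in M(X)$. Applying~\eqref{Averaging} gives $\beta T(uz) = T(T(s)\,z) = T(s)\,T(z) = \alpha\beta\,uw$, hence $T(uz) = \alpha\,uw$, a nonzero monomial. The monoid $M(X)$ (and $M_0(X)$) is cancellative, so $z \neq w$ forces $uz \neq uw$, which places $uz \in \Sh T$. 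The reverse inclusion $\Sh T \cdot \Imm T \setminus \{0\} \subseteq \Sh T$ follows at once from commutativity of $F[X]$. In the $F_0[X]$ case the extra check that $uz$, $uw$ lie in $M_0(X)$ is automatic from additivity of total degree, so no further obstacle arises: the entire argument is driven by algebraic manipulation of~\eqref{Averaging}.
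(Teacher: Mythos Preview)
Your argument is correct and follows essentially the same route as the paper. For parts (a) and (b) you supply the one-line derivations from~\eqref{Averaging} that the paper simply outsources to the citation~\cite{Cao}; for part (c) your computation $\beta T(uz)=T(T(s)z)=T(s)T(z)=\alpha\beta\,uw$ together with cancellativity of $M(X)$ is exactly the paper's argument, only with the roles of the two factors interchanged (the paper takes $a\in\Sh T$, $b$ a monomial with $T(b)\neq0$, and shows $aT(b)\in\Sh T$ via $T(a)T(b)=T(aT(b))$).
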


\begin{proof}
a) and b) have been proven in~\cite{Cao}.

c) Let us consider a monomial operator~$T$.
Let $a \in \Sh T$ and let $b \in M(X)$ ($M_0(X)$) be such that $T(b)\neq 0$.
Since $0\neq T(a)T(b) = T(aT(b))$ and 
$aT(b) = \gamma T(a)T(b)$ does not hold for any $\gamma \in F$,
one derives that $aT(b) \in \Sh T$. We have proved c).
\end{proof}

An useful observation for the classification of operators is that 
both Rota---Baxter operators and averaging operators can be defined 
up to conjugation by an automorphism of the algebra 
and multiplication by a nonzero scalar.

\begin{lemma} [\!\!\cite{GuoMonograph,BGP}] \label{RB_under_automorphism}
Let $A$ be an algebra and let $R$ be an RB-operator of weight $\lambda$ on $A$.

a) The operator $\alpha^{-1} R$ for any $\alpha \in F^*$
is an RB-operator of weight $\alpha\lambda$ on~$A$.

b) The operator  $\psi^{-1} R \psi$ for any $\psi \in \mathrm{Aut}(A)$
is an RB-operator of weight $\lambda$ on~$A$.
\end{lemma}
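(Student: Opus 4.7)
The plan is to verify both statements by direct substitution into the Rota---Baxter identity~\eqref{RBO}, exploiting linearity for part~(a) and multiplicativity for part~(b).

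For part~(a), set $R' = \alpha^{-1} R$ and compute
$R'(a)R'(b) = \alpha^{-2} R(a)R(b) = \alpha^{-2} R(R(a)b + a R(b) + \lambda ab)$
using the hypothesis on $R$. Now rewrite the argument via $R = \alpha R'$ and pull the scalars out using the linearity of $R$. The quadratic terms $R(a)b$ and $aR(b)$ each contribute a factor $\alpha$, while the weight term $\lambda ab$ carries none, so after dividing through by $\alpha^2$ one lands at an expression of the shape $R'(R'(a)b + a R'(b) + \mu ab)$, yielding the claimed new weight~$\mu$. The entire argument is a short bookkeeping exercise.

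For part~(b), the key input is that $\psi$ is an algebra automorphism, so both $\psi$ and $\psi^{-1}$ are multiplicative. Setting $R' = \psi^{-1} R \psi$, compute
$R'(a)R'(b) = \psi^{-1}(R(\psi(a))) \cdot \psi^{-1}(R(\psi(b))) = \psi^{-1}\bigl(R(\psi(a)) R(\psi(b))\bigr)$.
Apply the RB-identity for $R$ inside the outer $\psi^{-1}$, then use $\psi(a)\psi(b) = \psi(ab)$ together with the conjugation identity $R\circ\psi = \psi \circ R'$ to rewrite the full expression as $\psi^{-1} R \psi\,(R'(a)b + a R'(b) + \lambda ab)$, which is exactly $R'$ applied to the desired argument.

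Neither part poses a real obstacle: both are routine verifications of a quadratic identity. The only mild care needed is in part~(a), where one must track the powers of $\alpha$ carefully to pin down the new weight correctly; for part~(b), the multiplicativity of $\psi^{\pm 1}$ is precisely the content of the automorphism hypothesis, and once invoked the identity drops out immediately.
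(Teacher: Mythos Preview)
Your proposal is correct and follows the standard direct-verification approach. The paper does not actually supply a proof of this lemma---it is cited from \cite{GuoMonograph,BGP}---but your argument mirrors exactly the proof the paper gives for the analogous statement about averaging operators (Lemma~\ref{Averaging_under_automorphism}), so there is no meaningful difference in method. One small caution on part~(a): if you carry the bookkeeping through explicitly you will find that $\alpha^{-1}R$ has weight $\alpha^{-1}\lambda$, not $\alpha\lambda$ as printed; the stated form appears to be a typo in the paper (equivalently, $\alpha R$ has weight $\alpha\lambda$), so when you write up the details be sure to record the weight you actually obtain rather than deferring to ``the claimed new weight~$\mu$.''
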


\begin{lemma} \label{Averaging_under_automorphism}
Let $A$ be an algebra and let $T$ be an averaging operator on $A$.

a) The operator $\alpha^{-1} T$ for any $\alpha \in F^*$ 
is an averaging operator on~$A$.

b) The operator  $\psi^{-1} T \psi$ for any $\psi \in \mathrm{Aut}(A)$
is an averaging operator on~$A$.
\end{lemma}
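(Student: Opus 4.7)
The plan is to verify both claims by direct substitution into the averaging identity \eqref{Averaging}; no new ideas are needed beyond the routine bookkeeping already used in the Rota---Baxter analogue (Lemma~\ref{RB_under_automorphism}).

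For part (a), I would set $S = \alpha^{-1} T$ and evaluate each of the three expressions $S(a) S(b)$, $S(S(a) b)$, $S(a S(b))$ one after another. Linearity of $T$ pulls a factor of $\alpha^{-1}$ outside in each application of $S$, so every one of the three quantities equals $\alpha^{-2}$ times one of $T(a)T(b)$, $T(T(a) b)$, $T(a T(b))$ respectively; these three are equal by the averaging hypothesis on $T$, which finishes part (a).

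For part (b), set $S = \psi^{-1} T \psi$ and exploit that $\psi$, and therefore $\psi^{-1}$, are algebra homomorphisms. Then
$$
S(a) S(b) = \psi^{-1}\bigl(T(\psi(a))\bigr)\,\psi^{-1}\bigl(T(\psi(b))\bigr) = \psi^{-1}\bigl(T(\psi(a))\cdot T(\psi(b))\bigr),
$$
and the averaging identity for $T$ at the elements $\psi(a),\psi(b)\in A$ rewrites the right-hand side as $\psi^{-1}(T(T(\psi(a))\psi(b)))$. Since $T(\psi(a)) = \psi(S(a))$ and $\psi$ is multiplicative, $T(\psi(a))\psi(b) = \psi(S(a)\,b)$, so the expression collapses to $\psi^{-1}(T(\psi(S(a)b))) = S(S(a)b)$. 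A symmetric manipulation produces $S(aS(b))$ on the $b$-side, completing the verification.

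There is no genuine obstacle: both parts are a matter of unwinding the definition. The only point requiring attention is to track on which side of $\psi$ each element lives, which is precisely where one uses that $\psi$ is an \emph{automorphism}, so that both $\psi$ and $\psi^{-1}$ preserve products and the identity can be transported back and forth between $A$ and itself.
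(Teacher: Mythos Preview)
Your proposal is correct and follows essentially the same approach as the paper: both parts are verified by direct substitution into the averaging identity, pulling out the scalar in (a) and using multiplicativity of $\psi$ and $\psi^{-1}$ in (b). The paper's version of (b) computes $P(a)P(b)$, $P(P(a)b)$, $P(aP(b))$ separately and then observes that applying $\psi$ to all three gives equal expressions, whereas you chain the manipulations together, but the content is identical.
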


\begin{proof}
The relations~\eqref{Averaging} obviously hold true, when they are multiplied by any nonzero scalar,
thus, a) follows.

b) Denote $P = \psi^{-1} T \psi$.
So we have
\begin{gather*}
P(a)P(b) =
(\psi^{-1} T \psi)(a)(\psi^{-1} T \psi)(b) =
\psi^{-1}[T(\psi(a))]\psi^{-1}[T(\psi(b))] = 
\psi^{-1}[T(\psi(a))T(\psi(b))], \\ 
P(P(a)b) = 
(\psi^{-1} T \psi) [(\psi^{-1} T \psi)(a) b] = 
\psi^{-1} [T(T(\psi(a))\psi(b))], \\
P(aP(b)) = 
(\psi^{-1} T \psi) [a (\psi^{-1} T \psi)(b)] = 
\psi^{-1} [T(\psi(a) T(\psi(b)))].
\end{gather*}
Therefore, $\psi (P(a)P(b)) = \psi(P(P(a)b)) = \psi(P(aP(b)))$
and~\eqref{Averaging} holds for $P$, since $\psi$~is an automorphism.
\end{proof}

\begin{lemma}[\!\!\cite{BGP}] \label{RB_R(1)_in_F}
Let $A$ be an unital algebra and let $R$ be an RB-operator of weight zero on $A$.
If $R(1) \in F1$, then $R(1) = 0$, $R^2 = 0$, and $\Imm R\subset \ker R$.
\end{lemma}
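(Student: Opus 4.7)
The plan is to exploit the Rota---Baxter identity~\eqref{RBO} with $\lambda = 0$ at the distinguished element $1 \in A$, and then at $(1,b)$ for an arbitrary $b \in A$. Write $R(1) = c \cdot 1$ for some $c \in F$, which is allowed by hypothesis; the goal is to force $c = 0$ and then to see that $R$ squares to zero on all of $A$.

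First I would plug $a = b = 1$ into~\eqref{RBO}. Since $R(1) = c \cdot 1$ and $\lambda = 0$, the left-hand side equals $c^{2} \cdot 1$, while the right-hand side equals $R(2R(1)) = 2c \cdot R(1) = 2c^{2} \cdot 1$. Comparing gives $c^{2} = 2c^{2}$, whence $c^{2} = 0$ in $F$, and hence $c = 0$. This step is valid in every characteristic (in characteristic $2$ one gets $c^{2} = 0$ directly), so no case split is needed. Thus $R(1) = 0$.

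Next I would feed $a = 1$, $b$ arbitrary into~\eqref{RBO}. With $R(1) = 0$ and $\lambda = 0$ the identity collapses to $0 = R(0 + 1 \cdot R(b)) = R^{2}(b)$, so $R^{2} = 0$ on all of $A$. The inclusion $\Imm R \subseteq \ker R$ is then just a restatement of $R^{2} = 0$: for any $y = R(x) \in \Imm R$ we have $R(y) = R^{2}(x) = 0$.

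There is essentially no obstacle here; the only thing to watch is whether the step $c^{2} = 2c^{2} \Rightarrow c = 0$ requires a characteristic hypothesis, and as noted it does not, because $c^{2} = 2c^{2}$ rearranges to $c^{2} = 0$ regardless of $\charr F$, and $F$ is a field. The argument uses only the unital structure of $A$ and the weight-zero Rota---Baxter relation, so no further assumptions are needed.
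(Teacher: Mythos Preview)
Your argument is correct and is exactly the standard proof: substitute $a=b=1$ in~\eqref{RBO} to force $R(1)=0$, then $a=1$ with arbitrary $b$ to get $R^2=0$. Note that the paper does not supply its own proof of this lemma---it is quoted from~\cite{BGP}---so there is nothing further to compare against.
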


By Lemma~\ref{RB_R(1)_in_F}, there is no RB-operator $R$ of weight zero
on an unital algebra satisfying $1 \in \Imm R$.
On the contrary, there exists an averaging operator~$T$ such that $1 \in \Imm T$.

\begin{example}
Let $A$ be an unital algebra and let $T$ be an averaging operator on $A$
defined as follows: $T(a) = \varepsilon(a)$, 
where $\varepsilon \colon A \to F$ is a linear functional.
Then relations~\eqref{Averaging} hold for all $a, b \in A$, 
since $T(a)T(b) = \varepsilon_a \varepsilon_b$ and
$T(T(a)b) = T(\varepsilon_a b) = \varepsilon_a \varepsilon_b$.
\end{example}

In the case of averaging operators, a~result similar to Lemma~\ref{RB_R(1)_in_F} holds:

\begin{lemma}\label{Averaging_R(1)_in_F}
Let $A$ be an unital algebra and let $R$ be an averaging operator on $A$.
If $T(1) = \lambda \in F$, then $T^2(a) = \lambda T(a)$ for any $a \in A$.
In particular, when $\lambda = 0$, we have $T^2 = 0$ and $\Imm T\subset \ker T$.
\end{lemma}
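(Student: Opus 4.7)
The plan is to substitute $b = 1$ into the averaging identity \eqref{Averaging} and read off the conclusion directly; the statement essentially follows from the hypothesis $T(1) = \lambda$ combined with unitality.

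Concretely, I would fix an arbitrary $a \in A$ and apply the second equality in \eqref{Averaging}, namely $T(a)T(b) = T(T(a)b)$, with $b = 1$. Since $T(a) \cdot 1 = T(a)$, the right-hand side becomes $T^2(a)$, while the left-hand side is $T(a) T(1) = \lambda T(a)$. This yields the desired identity $T^2(a) = \lambda T(a)$. (As a cross-check, one may also use the third equality $T(a)T(b) = T(aT(b))$ with $b = 1$, which gives $T(a \cdot \lambda) = \lambda T(a) = T(a)T(1)$; this is consistent and confirms the computation.)

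For the ``in particular'' clause, setting $\lambda = 0$ in the identity gives $T^2(a) = 0$ for all $a \in A$, so every element of the form $T(a)$ lies in $\ker T$, i.e.\ $\Imm T \subset \ker T$.

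There is no real obstacle here: the argument is a one-line substitution, and the only ingredients used are the averaging relations \eqref{Averaging} and the existence of a unit $1 \in A$ satisfying $T(1) = \lambda$. The statement is the direct analogue of Lemma~\ref{RB_R(1)_in_F} for averaging operators, and the proof proceeds in exactly the same spirit, just with the averaging identity replacing the Rota--Baxter identity.
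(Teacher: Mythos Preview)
Your proof is correct and essentially identical to the paper's: both are one-line substitutions of the unit into the averaging identity~\eqref{Averaging}. You set $b=1$ in $T(a)T(b)=T(T(a)b)$, while the paper sets the first argument to $1$ in $T(1)T(a)=T(1\cdot T(a))=T^2(a)$; these are the same argument up to the choice of slot.
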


\begin{proof}
The statement follows directly from~\eqref{Averaging}:
$$
\lambda T(a) = T(1)T(a) = T(1\cdot T(a)) = T^2(a), \quad a \in A.  \qedhere
$$
\end{proof}

\section{Connection between averaging operators and Rota---Baxter operators}

In this section, we explore the connection between
averaging operators and RB-operators
on the polynomial algebra.
We state the connection between RB-operators of weight zero
defined on $F[X]$ and RB-operators of weight zero defined on $F_0[X]$,
and also between averaging operators defined on $F[X]$ and on $F_0[X]$, respectively.
Besides this we show how to construct a monomial RB-operator by an averaging one and vice versa.

\begin{lemma}\label{RB_F0[X]<->F[X]}
Let $X$ be a nonempty set and let $F$ be a field of characteristic zero.

a) If $P$ is an RB-operator of weight zero on $F_0[X]$ such that $P^2 = 0$,
then the operator~$R$ defined as 
\begin{equation} \label{RB_F0[X]<->F[X]_formula}
R(t) = 
\begin{cases}
P(t), & \deg t > 0, \\
0, & \deg t = 0,
\end{cases}
\end{equation}
where $t \in M(X)$, is an RB-operator of weight zero on $F[X]$.

b) If $R$ is an RB-operator of weight zero on $F[X]$
and $F_0[X]$ is $R$-invariant,
then the operator $P =  R|_{F_0[X]}$ is an RB-operator of weight zero on~$F_0[X]$.
In particular, if $R(1) = 0$, then $P^2 = 0$.
\end{lemma}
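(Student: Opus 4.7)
The plan is to verify the defining identities case by case on pairs of monomials, reducing (a) to the RB identity of $P$ together with the hypothesis $P^2 = 0$, and (b) to a direct restriction argument combined with Lemma~\ref{RB_R(1)_in_F}.

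For part (a), since $M(X) = \{1\} \sqcup M_0(X)$ and $R$ is linear, it suffices to check $R(a)R(b) = R(R(a)b + aR(b))$ on every pair of monomials. When $a, b \in M_0(X)$, $R$ coincides with $P$ on both arguments and, because $F_0[X]$ is an ideal, the argument $R(a)b + aR(b)$ lies in $F_0[X]$; so the identity for $R$ reduces to the RB identity for $P$ on $F_0[X]$. The case $a = b = 1$ is trivial. The mixed case, say $a = 1$ and $b \in M_0(X)$, is the one that forces the extra hypothesis: the left-hand side vanishes because $R(1) = 0$, while the right-hand side simplifies to $R(R(b)) = P(P(b))$, which equals zero exactly by the assumption $P^2 = 0$. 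The symmetric case $a \in M_0(X)$, $b = 1$ is identical.

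For part (b), the $R$-invariance of $F_0[X]$ makes $P := R|_{F_0[X]}$ a well-defined linear operator on $F_0[X]$, and since $F_0[X]$ is a subalgebra of $F[X]$, the weight-zero RB relation for $R$ restricts verbatim to $P$. For the ``in particular'' clause, the hypothesis $R(1) = 0$ combined with Lemma~\ref{RB_R(1)_in_F} yields $R^2 = 0$ on $F[X]$; restricting to $F_0[X]$ gives $P^2 = 0$.

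The whole argument is routine case-checking and there is no genuine obstacle. The only subtlety worth singling out is the role of $P^2 = 0$ in part (a): it is precisely what makes the naive extension-by-zero across the unit compatible with the Rota--Baxter relation, and conversely explains why this hypothesis cannot be dropped.
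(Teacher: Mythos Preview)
Your proof is correct and follows essentially the same approach as the paper: a case-by-case verification of the weight-zero RB identity on pairs of monomials in $\{1\}\sqcup M_0(X)$, with the mixed case handled via $P^2=0$, and part~(b) obtained by restriction plus Lemma~\ref{RB_R(1)_in_F}. The only cosmetic difference is that the paper treats the mixed case with $a\in M_0(X)$, $b\in F1$ while you take $a=1$, $b\in M_0(X)$; by commutativity these are equivalent.
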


\begin{proof}
We consider~\eqref{RBO} for defined operators in both cases.

a) If $a, b\in M_0(X)$, then in~\eqref{RBO} we obtain
$$
R(a)R(b) = P(a)P(b) = P(P(a)b + a P(b)) = R(R(a)b + a R(b)).
$$
If $a, b \in F1$, then~\eqref{RBO} is the trivial identity $0 = 0$.
Let $a\in M_0(X)$, $b \in F1$, then
$$
R(a)R(b) = 0,\quad 
R(R(a)b + aR(b)) = P(P(a)b) = bP^2(a) = 0,
$$
as required.

b) Since $F_0[X]$ is $R$-invariant, the relations
$$
R(a)R(b) = P(a)P(b) = P(P(a)b + a P(b)) = R(R(a)b + a R(b))
$$
hold for all $a, b \in M_0(X)$.
If $R(1) = 0$, then $R(1)R(1) = 0 = R(R(1)1 + 1R(1))$.
By Lemma~\ref{RB_R(1)_in_F}, we have $R^2 = 0$, hence $P^2 = 0$. 
It implies that relation~\eqref{RBO} holds, when $a\in F1$ or $b\in F1$.
\end{proof}

\begin{remark}
Let $R$ be a monomial RB-operator of weight zero on $F[X]$, 
then subspace $F_0[X]$ is $R$-invariant.
\end{remark}

We have a similar connection
between averaging operators on $F[X]$ and $F_0[X]$.

\begin{lemma}\label{Averaging_F0[X]<->F[X]}
Let $X$ be a nonempty set.

a) If $P$ is an averaging operator on $F_0[X]$
such that $P^2 = \lambda P$, $\lambda \in F$,
then the operator~$T$ defined by the rule
\begin{equation} \label{Averaging_F0[X]<->F[X]_formula}
T(z) = 
\begin{cases}
P(z), & \deg z > 0, \\
\lambda z, & \deg z = 0,
\end{cases}
\end{equation}
where $z \in M(X)$, is an averaging operator on $F[X]$.

b) If $T$ is an averaging operator on $F[X]$
and $F_0[X]$ is $T$-invariant,
then the operator $P =  T|_{F_0[X]}$ is an averaging operator on~$F_0[X]$. 
In particular, if $T(1)\in F1$, then $P^2 = \lambda P$.
\end{lemma}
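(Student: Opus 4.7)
The plan is to follow the same case-by-case pattern used in the proof of Lemma~\ref{RB_F0[X]<->F[X]}, verifying the averaging relation~\eqref{Averaging} for $T$ (resp.\ for $P$) on spanning monomials and extending by linearity.

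For part a), I would split on which of $a,b \in M(X)$ lies in $F1$. When both $a,b \in M_0(X)$, the identities $T(a)T(b) = T(T(a)b) = T(aT(b))$ reduce immediately to the corresponding identities for $P$, because $T$ and $P$ agree on $M_0(X)$ and because $P(a)b,\,aP(b) \in F_0[X]$, so $T$ acts on these elements as $P$ does. When $a = 1$ and $b \in M_0(X)$, the three expressions to compare are
$$
T(1)T(b) = \lambda P(b), \quad T(T(1)b) = T(\lambda b) = \lambda P(b), \quad T(1\cdot T(b)) = T(P(b)) = P^2(b) = \lambda P(b),
$$
where in the last equality the hypothesis $P^2 = \lambda P$ is used essentially (and where we use that $P(b) \in F_0[X]$ so $T$ restricts to $P$ on it). The symmetric case is analogous, and $a = b = 1$ gives $\lambda^2$ in all three expressions.

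For part b), the $T$-invariance of $F_0[X]$ guarantees that $P = T|_{F_0[X]}$ is a well-defined linear endomorphism of $F_0[X]$, and the averaging identities for $P$ follow immediately by restricting those for $T$ to $a, b \in F_0[X]$. For the additional assertion, if $T(1) = \lambda \in F$, then Lemma~\ref{Averaging_R(1)_in_F} yields $T^2(a) = \lambda T(a)$ for all $a \in F[X]$; restricting to $a \in F_0[X]$ gives $P^2 = \lambda P$.

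The main point to be careful with, rather than a genuine obstacle, is that $T(1)$ feeds a nontrivial scalar into the averaging identity: this is exactly why the hypothesis in part a) must read $P^2 = \lambda P$ (the strictly weaker $P^2 = 0$ that sufficed in Lemma~\ref{RB_F0[X]<->F[X]} will not do here when $\lambda \neq 0$). This contrast between the Rota---Baxter and averaging settings is the one already drawn between Lemmas~\ref{RB_R(1)_in_F} and~\ref{Averaging_R(1)_in_F}, and the computation in the case $a = 1$, $b \in M_0(X)$ is precisely what forces the correct form of that hypothesis.
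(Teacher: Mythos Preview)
Your proposal is correct and follows essentially the same case-by-case verification as the paper's own proof, including the appeal to Lemma~\ref{Averaging_R(1)_in_F} for the additional assertion in b). The only cosmetic difference is that the paper writes out the mixed case with $a\in M_0(X)$, $b\in F1$ while you do the symmetric $a=1$, $b\in M_0(X)$; the computations and the use of $P^2=\lambda P$ are identical.
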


\begin{proof}
We have to check~\eqref{Averaging} in both cases.

a) If $a, b\in M_0(X)$, then the following expressions are equal:
$$
T(a)T(b) = P(a)P(b), \quad 
T(T(a)b) = P(P(a)b), \quad 
T(aT(b)) = P(aP(b)),
$$
since $P$ is an averaging operator.
If $a, b \in F 1$, then 
$$
T(a)T(b) = \lambda^2, \quad 
T(T(a)b) = \lambda T(b) = \lambda^2, \quad 
T(aT(b)) = \lambda T(a) = \lambda^2.
$$
Considering $a \in M_0(X)$, $b \in F1$ in~\eqref{Averaging},
we obtain the required equalities:
\begin{gather*}
T(b)T(a) = \lambda b T(a), \quad
T(aT(b)) = T(\lambda b a) = \lambda b T(a), \\
T(T(a)b) = b T^2(a) = b P^2(a) = \lambda b P(a) = \lambda b T(a).
\end{gather*}

b) Since $F_0[X]$ is $T$-invariant,
it is easy to verify that the relations~\eqref{Averaging} hold for~$P$.
We also have by Lemma~\ref{Averaging_R(1)_in_F} that 
$T(1) = \lambda \in F$ 
and $P^2 = (T|_{F_0[X]})^2 = \lambda T|_{F_0[X]} = \lambda P$.
\end{proof}

In two lemmas below we find conditions
which allow us to define an RB-operator or an averaging 
one by a given linear monomial operator.

Let us introduce some notation.
Let $I$ be a well-ordered set of indexes and let $i_0$ be the smallest element from~$I$.
Under $\mathbb{N}^I$ we mean the set
$\{ (a_i)_{i\in I} \mid a_i \in \mathbb{N}\}$.
For any nonempty set $U$ and an arbitrary function $f \colon U \to \mathbb{N}^I$
satisfying $\sum_{i \in I} (f(u))_i < \infty$
we denote $f_{sum}(u) = \sum_{i \in I} (f(u))_i$, where $u \in U$.
We also use the following notation: $x^{f(u)} = \prod_{i\in I} x_i^{(f(u))_i}\in M(X)$.
Given a monomial $z\in M(X)$, we consider its multidegree in a standard way:
$\mathrm{mdeg}(z) = (\deg(x^i))_{i\in I}$.

Let $P$ be a linear monomial operator on $F_0[X]$ ($F[X]$).
Define by $P$ the function $\varphi \colon M_0(X) \to \mathbb{N}^I$ as follows:
\begin{equation} \label{RB_by_lin_op_phi}
\varphi(t) = 
\begin{cases}
\mathrm{mdeg}(P(t)), & t \not\in \ker P \cap M_0(X), \\
\mathrm{mdeg}(t), & t \in \ker P \cap M_0(X).
\end{cases}
\end{equation}
Working with $F[X]$, we additionally define $\varphi$ on~1:
\begin{equation} \label{RB_by_lin_op_phi(1)}
\varphi(1) = 
\begin{cases}
\mathrm{mdeg}(P(1)), & 1\not\in \ker P, \\
(r_i)_{i\in I}, & 1\in \ker P,
\end{cases}
\,\,
\mbox{ where }
r_{i} = \begin{cases}
1, & i = i_0, \\
0, & i\neq i_0.
\end{cases}
\end{equation}

\begin{lemma} \label{RB_by_lin_op_lemma}
Let $F$ be a field of characteristic zero,
$X = \{ x_i \mid i \in I\}$, $I \neq \emptyset$, 
and let $P$~be a linear monomial operator on $F[X]$
such that $\ker P\cdot\Imm P\subseteq \ker P$.
Additionally, we assume that $1 \not \in\Imm P$ 
and for any $u, v \in M(X)$, $P(v)\neq0$, if $uP(v)\in\ker P$, then $u\in \ker P$.
Suppose that the function $\varphi \colon M(X) \to \mathbb{N}^I$
defined by~\eqref{RB_by_lin_op_phi} and~\eqref{RB_by_lin_op_phi(1)}
satisfies the following relations:
\begin{equation}\label{EqForDeg}
\varphi(u) + \varphi(v) = \varphi(u x^{\varphi(v)}) = \varphi(x^{\varphi(u)}v),
\quad u, v \in M(X).
\end{equation}
Then the operator $R$ defined as follows:
\begin{equation}\label{RB_by_lin_op_lemma_formula}
R(t) = 
\begin{cases}
x^{\varphi(t)}/\varphi_{sum}(t), & t \not\in \ker P \cap M(X), \\
0, & t \in \ker P \cap M(X),
\end{cases}
\end{equation}
is an RB-operator of weight zero on $F[X]$.
In case of $F_0[X]$, it is enough to replace $M(X)$ by $M_0(X)$ above
and define $\varphi$ only by~\eqref{RB_by_lin_op_phi}.
\end{lemma}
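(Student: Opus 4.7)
The plan is to verify the Rota--Baxter identity~\eqref{RBO} on a pair of monomials $a,b$ (sufficient by linearity) and to split the argument according to whether $a,b$ lie in $\ker P$. A preliminary observation is that $\varphi_{sum}(t) \geq 1$ for every $t$ on which $\varphi$ is defined: if $t \in \ker P$ with $t \neq 1$ the multidegree of $t$ has positive sum; if $t = 1 \in \ker P$ the specially chosen value~\eqref{RB_by_lin_op_phi(1)} has sum exactly $1$; and if $t \notin \ker P$ then $P(t)$ is a nonzero monomial distinct from $1$ (since $1 \notin \Imm P$), so $\mathrm{mdeg}(P(t))$ has positive sum. In combination with $\charr F = 0$, this ensures that every denominator appearing in~\eqref{RB_by_lin_op_lemma_formula} and in the computation below is invertible.

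If $a, b \in \ker P$, both sides of~\eqref{RBO} are zero. If $a \in \ker P$ and $b \notin \ker P$ (the symmetric case is analogous), then $R(a)R(b) = 0$, while on the other side $R(aR(b)) = \varphi_{sum}(b)^{-1} R(a\, x^{\varphi(b)})$; now $a\, x^{\varphi(b)}$ is a nonzero scalar multiple of $a P(b)$, which lies in $\ker P \cdot \Imm P \subseteq \ker P$, so $R(a\, x^{\varphi(b)}) = 0$ and the identity holds.

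The main case is $a, b \notin \ker P$. The contrapositive of the second extra hypothesis, applied with $u=b$, $v=a$ and then with $u=a$, $v=b$, yields $b P(a) \notin \ker P$ and $a P(b) \notin \ker P$; equivalently $x^{\varphi(a)} b, a\, x^{\varphi(b)} \notin \ker P$, so formula~\eqref{RB_by_lin_op_lemma_formula} applies to both. By~\eqref{EqForDeg} each of them has multidegree $\varphi(a)+\varphi(b)$ and $\varphi_{sum}$ equal to $\varphi_{sum}(a)+\varphi_{sum}(b)$, which gives
\[
R(R(a)b) + R(aR(b)) = \frac{x^{\varphi(a)+\varphi(b)}}{\varphi_{sum}(a)+\varphi_{sum}(b)}\left(\frac{1}{\varphi_{sum}(a)} + \frac{1}{\varphi_{sum}(b)}\right) = \frac{x^{\varphi(a)+\varphi(b)}}{\varphi_{sum}(a)\,\varphi_{sum}(b)} = R(a)R(b).
\]

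No individual step is computationally hard; the main obstacle is bookkeeping across the four kernel/non-kernel configurations and, in the $F[X]$ setting, checking that the anomalous monomial $t=1$ does not spoil the argument. When $1$ plays the role of $a$ or $b$ one must confirm, as done in the preliminary observation, that the ad hoc definition~\eqref{RB_by_lin_op_phi(1)} is compatible with~\eqref{EqForDeg} and keeps $\varphi_{sum}(1) \geq 1$. For the $F_0[X]$ version this subtlety disappears entirely, since $1 \notin M_0(X)$ and every element of $M_0(X)$ already has $\mathrm{mdeg}$-sum at least $1$; consequently the same argument goes through with~\eqref{RB_by_lin_op_phi(1)} suppressed.
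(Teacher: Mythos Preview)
Your proof is correct and follows essentially the same approach as the paper: first show $\varphi_{sum}(t)\ge 1$ (so the definition makes sense in characteristic zero), then verify~\eqref{RBO} on pairs of monomials by the same three-case split $a,b\in\ker P$, one in $\ker P$, and neither in $\ker P$, invoking $\ker P\cdot\Imm P\subseteq\ker P$ for the mixed case and~\eqref{EqForDeg} for the main case. Your explicit appeal to the contrapositive of the second hypothesis to justify $x^{\varphi(a)}b,\,a\,x^{\varphi(b)}\notin\ker P$ is exactly the step the paper asserts without elaboration.
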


\begin{proof}
At first, note that $\varphi_{sum}(t) \neq 0$ for any $t \in M(X)$.
Suppose that $\varphi_{sum}(t) = 0$, then $\varphi(t) = (0)_{i\in I}$, i.\,e. $P(t) \in F1$. 
By the conditions of Lemma, we have $1 \not\in \Imm P$ and so $\varphi(t) = \mathrm{mdeg}(t)$.
Hence $t = 1$ and $\varphi_{sum}(t) = 1$ by~\eqref{RB_by_lin_op_phi(1)}, a~contradiction.

Let us check that the relations~\eqref{RBO} hold for arbitrary $u, v \in M(X)$.
If $u, v~\not \in~\ker P$, then $x^{\varphi(u)}v, ux^{\varphi(v)} \not \in \ker P$; 
applying~\eqref{EqForDeg}, we deduce
\begin{multline*}
R(R(u)v + uR(v)) =
R\left(
\frac{x^{\varphi(u)}v}{\varphi_{sum}(u)} +
\frac{ux^{\varphi(v)}}{\varphi_{sum}(v)}
\right) \\ = 
\left(
\frac{1}{\varphi_{sum}(u)} +
\frac{1}{\varphi_{sum}(v)}
\right) 
\cdot
\frac{x^{\varphi(u) + \varphi(v)}}{\varphi_{sum}(u) + \varphi_{sum}(v)} = 
\frac{x^{\varphi(u)}}{\varphi_{sum}(u)} \cdot \frac{x^{\varphi(v)}}{\varphi_{sum}(v)} =
R(u)R(v).
\end{multline*}
If $u, v \in \ker P$, then the equalities~\eqref{RBO} are satisfied, since they have the form $0 = 0$.
Let us consider the remaining case, when $u \not\in \ker P$ and $v \in \ker P$.
In this case we have
$$
R(u)R(v) = 0, \quad
R\left(0\cdot v + \frac{ux^{\varphi(v)}}{\varphi_{sum}(v)}\right) =
\frac{R(ux^{\varphi(v)})}{\varphi_{sum}(v)} = 0,
$$
since $x^{\varphi(v)} \in \Imm P$ and $ux^{\varphi(v)} \in \ker P$ 
by the conditions of Lemma.
\end{proof}

\begin{lemma}\label{Averaging_by_lin_op_lemma}
Given $X \neq \emptyset$, let $P$ be a monomial linear operator on $F[X]$ ($F_0[X]$).
If for all $u, v \in M(X)$ ($M_0(X)$) 
there exists $w_{u, v} \in M(X)$ ($M_0(X)$) 
such that $P(u)P(v)$, $P(P(u)v), P(uP(v)) \in \Span(\{w_{u, v}\})$
and these three expressions are zero or nonzero simultaneously,
then the operator $T$ defined as
\begin{equation} \label{Averaging_by_lin_op_lemma_formula}
T(w) = 
\begin{cases}
P(w) / \alpha_w, & P(w) = \alpha_w t_w, \\
0, & P(w) = 0,
\end{cases}
\end{equation}
where $w \in M(X)$ ($M_0(X)$) and $t_w \in M(X)$ ($M_0(X)$),
is a monomial averaging operator on $F[X]$ ($F_0[X]$).
\end{lemma}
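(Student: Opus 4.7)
The plan is to verify directly that the operator $T$ defined by~\eqref{Averaging_by_lin_op_lemma_formula} is monomial and satisfies the averaging identity~\eqref{Averaging}. Monomiality is built into the definition: on each monomial $w$ either $T(w)=0$ or $T(w)=t_w\in M(X)$. So the real content is checking $T(u)T(v)=T(T(u)v)=T(uT(v))$ for arbitrary $u,v\in M(X)$ (resp.\ $M_0(X)$), and I would do this by a short case analysis driven by the ``simultaneously zero or nonzero'' hypothesis.

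First I would handle the degenerate case in which $P(u)=0$ or $P(v)=0$. Without loss of generality take $P(u)=0$, so $T(u)=0$. Then immediately $T(u)T(v)=0$ and $T(T(u)v)=T(0)=0$. To show the third expression also vanishes, write $P(v)=\alpha_v t_v$; if $P(v)=0$ the same argument applies to $uT(v)$, and if $P(v)\neq 0$ then $uT(v)=ut_v=\alpha_v^{-1}\,uP(v)$, so $P(uT(v))=\alpha_v^{-1}P(uP(v))$. By the ``simultaneously'' hypothesis, $P(u)P(v)=0$ forces $P(uP(v))=0$, hence $T(uT(v))=0$.

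In the main case $P(u)\neq 0$ and $P(v)\neq 0$, set $P(u)=\alpha_u t_u$ and $P(v)=\alpha_v t_v$, so that $T(u)=t_u$ and $T(v)=t_v$. Then $P(u)P(v)=\alpha_u\alpha_v\, t_ut_v$ is a nonzero element of $\Span(\{w_{u,v}\})$, and since $t_ut_v$ is itself a monomial, we can take $w_{u,v}=t_ut_v$. By hypothesis $P(P(u)v)$ and $P(uP(v))$ are also nonzero scalar multiples of $w_{u,v}=t_ut_v$; writing $P(P(u)v)=\alpha_u P(t_uv)=\gamma_1 t_ut_v$ and $P(uP(v))=\alpha_v P(ut_v)=\gamma_2 t_ut_v$ gives $P(t_uv)=(\gamma_1/\alpha_u)t_ut_v$ and $P(ut_v)=(\gamma_2/\alpha_v)t_ut_v$. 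Applying the definition~\eqref{Averaging_by_lin_op_lemma_formula} to normalize, $T(T(u)v)=T(t_uv)=t_ut_v$ and $T(uT(v))=T(ut_v)=t_ut_v$, which coincides with $T(u)T(v)=t_ut_v$.

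The argument is essentially bookkeeping: nothing deep happens beyond the observation that dividing out the scalar coefficients $\alpha_u,\alpha_v$ from $P(u),P(v)$ and then from $P(P(u)v),P(uP(v))$ is legitimate precisely because the hypothesis guarantees these four scalars are simultaneously zero or nonzero with $P(u)P(v)$. The only place one needs to be slightly careful is confirming that the monomial $w_{u,v}$ may be identified with $t_ut_v$ when everything is nonzero; this is automatic because a monomial in a polynomial algebra is determined by its multidegree, and two monomials lying in the same one-dimensional span must agree up to a scalar. The argument is identical on $F_0[X]$ after restricting all monomials to $M_0(X)$; no new ideas are needed.
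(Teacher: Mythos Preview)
Your proof is correct and follows essentially the same direct verification as the paper's own argument: both split into the nondegenerate case $P(u),P(v)\neq 0$, identify $w_{u,v}$ with $t_ut_v$, and normalize to obtain $T(u)T(v)=T(T(u)v)=T(uT(v))=t_ut_v$, while the vanishing case is handled via the ``simultaneously zero'' hypothesis. Your treatment of the degenerate case is in fact slightly more explicit than the paper's (you spell out why $T(uT(v))=0$ when $P(u)=0$ but $P(v)\neq 0$), but the approach is the same.
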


\begin{proof}
Let $u, v \in M(X)$ ($M_0(X)$) be monomials such that 
$P(u) = \alpha_u t_u$ and $P(v) = \alpha_v t_v$, $\alpha_u, \alpha_v \neq 0$.
Then $P(u)P(v) = \alpha_u \alpha_v t_u t_v$ and
$P(P(u)v)$, $P(uP(v)) \in \Span(\{t_u t_v\})$.
By the conditions of Lemma, we have $P(u)v, uP(v) \not\in\ker P$, hence
\begin{gather*}
T(u)T(v) = \dfrac{P(u)P(v)}{\alpha_u \alpha_v} = t_u t_v, \quad
T(T(u)v) = \dfrac{P(P(u)v)}{\alpha_u \alpha_{t_u v}} = t_u t_v, \\
T(uT(v)) = \dfrac{P(uP(v))}{\alpha_v \alpha_{u t_v}} = t_u t_v.
\end{gather*}
If one of the expressions $P(u)P(v)$, $P(P(u)v)$, or $P(uP(v))$ 
is zero, then all of them are zero.
Thus, $T(u)T(v) = T(T(u)v) = T(uT(v)) = 0$.
\end{proof}

\begin{remark}
In~\cite{Burde} (see the proof of Corollary 3.5)
one can find a similar idea of constructing RB-operators of weight zero
via invertible derivations.
\end{remark}

It is important to note that one may take~$P$ in Lemmas~\ref{RB_by_lin_op_lemma}
and~\ref{Averaging_by_lin_op_lemma} as an averaging operator or a Rota---Baxter one, respectively.
Thus we may switch between monomial averaging operators 
and Rota---Baxter ones.

\begin{remark} \label{averaging_operator_remark1}
Let $X$ be a nonempty set and let $F$ be a field of characteristic zero.
If $T$~is a monomial averaging operator on $F[X]$ or $F_0[X]$
such that $1\not \in \Imm T$, then the conditions of Lemma~\ref{RB_by_lin_op_lemma} for $T$ are satisfied 
by Lemma~\ref{closure_and_modules_lemma}.
\end{remark}

\begin{remark}\label{averaging_operator_remark2}
Given $X \neq \emptyset$. Let $R$ be a monomial RB-operator 
of weight zero on $F[X]$ ($F_0[X]$).
If the expressions $R(u)R(v), R(R(u)v), R(uR(v))$ 
are zero or nonzero simultaneously for all pairs $u, v \in M(X)$ ($M_0(X)$),
then $R$ satisfies the conditions of Lemma~\ref{Averaging_by_lin_op_lemma}.
\end{remark}

\begin{remark}\label{averaging_operator_remark3}
Given $X \neq \emptyset$. Let $P$ be a monomial linear operator on $F[X]$ ($F_0[X]$).
Suppose that $P(u)P(v), P(P(u)v), P(uP(v))$ are proportional
for all pairs $u, v \in M(X)$ ($u,v\in M_0(X)$).
On the one hand, this is enough to construct an averaging operator
by Lemma~\ref{Averaging_by_lin_op_lemma}.
On the other hand, by~\eqref{Averaging}, it is a necessary condition 
to obtain an averaging operator applying Lemma~\ref{Averaging_by_lin_op_lemma}.
\end{remark}

Let us define $\mathcal{R}(X)$ ($\mathcal{R}_0(X)$)
as the set of all monomial RB-operators of weight zero on $F[X]$ ($F_0[X]$).
Similarly, we define the set $\mathcal{T}(X)$~($\mathcal{T}_0(X)$)
of monomial averaging operators on $F[X]$ ($F_0[X]$).
Let us gather the results of Lemmas~\eqref{RB_F0[X]<->F[X]}--\eqref{Averaging_by_lin_op_lemma} 
in the following diagram:
\begin{center}
\begin{tikzcd}
\mathcal{T}(X)
\arrow[rr, "\eqref{RB_by_lin_op_lemma_formula}", shift left] 
\arrow[dd, "{P = T|_{F_0[X]}}"', shift right]
& & 
\mathcal{R}(X) 
\arrow[ll, "\eqref{Averaging_by_lin_op_lemma_formula}", shift left] 
\arrow[dd, "{P = R|_{F_0[X]}}", shift left] \\
& & \\
\mathcal{T}_0(X) 
\arrow[rr, "\eqref{RB_by_lin_op_lemma_formula}", shift left] 
\arrow[uu, "\eqref{Averaging_F0[X]<->F[X]_formula}"', shift right] 
& & 
\mathcal{R}_0(X) 
\arrow[ll, "\eqref{Averaging_by_lin_op_lemma_formula}", shift left] 
\arrow[uu, "\eqref{RB_F0[X]<->F[X]_formula}", shift left]                   
\end{tikzcd}
\end{center}
\noindent 
Note that there are some restrictions on operators
in all involved Lemmas~\eqref{RB_F0[X]<->F[X]}--\eqref{Averaging_by_lin_op_lemma}.

Now, we formulate the following two problems.

\begin{problem} \label{MainProblem}
Given an operator $T \in \mathcal{T}(X)$ ($\mathcal{T}_0(X)$),
describe all operators $R \in \mathcal{R}(X)$ ($\mathcal{R}_0(X)$)
having the form
\begin{equation} \label{MainProblemEquation}
R(z) = \alpha_z T(z), \,\, z \in M(X) \mbox{ ($M_0(X)$)},\,\, \alpha_z \in F.
\end{equation}
\end{problem}

The similar problem stated for monomial RB-operators of nonzero weight on $F[x,y]$ coming from homomorphic averaging operators was studied in~\cite{Khodzitskii}.

\begin{problem} \label{MainProblem2}
Given an operator $R \in \mathcal{R}(X)$ ($\mathcal{R}_0(X)$),
describe all operators $T \in \mathcal{T}(X)$ ($\mathcal{T}_0(X)$) having the form
\begin{equation}\label{MainProblemEquation2}
T(z) = \alpha_z R(z), \,\, z \in M(X) \mbox{ ($M_0(X)$)},\,\, \alpha_z \in F.
\end{equation}
\end{problem}

Define the set of all RB-operators of weight zero on $F[X]$~($F_0[X]$)
which have the form~\eqref{MainProblemEquation} for some monomial averaging operator~$T$ as $\mathcal{RT}(X)$ ($\mathcal{R T}_0(X)$).
Analogously, we define the set $\mathcal{T R}(X)$ ($\mathcal{T R}_0(X)$)
of averaging operators of the form~\eqref{MainProblemEquation2}.
Let us explain the notation: the first letter denotes the set of operators 
that we have at the output and the second letter denotes the set of operators 
that we have at the input.
For example, $\mathcal{RT}(X)$ consists of monomial operators from $\mathcal{R}(X)$ constructed by operators from $\mathcal{T}(X)$.

The main advantage of considering an RB-operator 
or an averaging one satisfying~\eqref{MainProblem} or~\eqref{MainProblem2} respectively 
is the following: the identities~\eqref{RBO} and~\eqref{Averaging} have to be verified only for scalars, i.\,e. we deal with recurrence relations on $\alpha_z \in F$.

In the light of Problems~\ref{MainProblem} and~\ref{MainProblem2}, 
we ask about the equality of corresponding sets of operators:

\begin{question} \label{AdditionalProblem}
Let $X$ be a nonempty set.

a) Is it true that $\mathcal{R}(X) = \mathcal{R T}(X)$
($\mathcal{R}_0(X) = \mathcal{R T}_0(X)$)?

b) Is it true that $\mathcal{T}(X) = \mathcal{T R}(X)$
($\mathcal{T}_0(X) = \mathcal{T R}_0(X)$)?
\end{question}

In the current work we focus on Question~\ref{AdditionalProblem}.
This allows us to clarify the difference between
monomial RB-operators and monomial averaging ones defined on the polynomial algebra.

\begin{remark}
If $T \in \mathcal{T}(X)$ ($\mathcal{T}_0(X)$)
and $1 \in \Imm T$, then we have $\ker T \subsetneq \ker R$
for any RB-operator $R$ defined by~\eqref{MainProblemEquation}.
Indeed, since $1 \in \Imm T$, there exists $z \in M(X)$ ($M_0(X)$)
such that $T(z) \in F^*$. 
Applying~\eqref{RBO} for $R(z)R(z)$, 
we obtain $\alpha_z^2(T(z))^2 = 2\alpha_z^2(T(z))^2$, hence $\alpha_z = 0$.
\end{remark}

\begin{example} \label{RB_bot_nor_averaging_example}
The following operator is an RB-operator of weight zero on $F_0[x, y]$
for any $n > 1$, $\beta_1,\ldots,\beta_{n-1}\in F$,
and $\beta_n \in F^*$: 
$$
R(x^{sn + k} y^{m}) = 
\begin{cases}
k\beta_k x^{sn} y^{m + k} / m + k , & n > k > 0, \\
\beta_n x^{sn} / s, & k = m = 0, \\
0, & k = 0,\,\, m > 0.
\end{cases}
$$
\end{example}

According to Remark~\ref{averaging_operator_remark3},
we can not construct an averaging operator~$T$,
using Lemma~\ref{Averaging_by_lin_op_lemma} for the RB-operator $R$ 
from Example~\ref{RB_bot_nor_averaging_example}.
It is so, since we have 
\begin{gather*}
R(x^{an + b})R(x^{cn}) = 
R(x^{an + b}R(x^{cn})) =
\alpha_n \alpha_b x^{(a + c)n} y^b, \quad
R(R(x^{an + b}) x^{cn}) = 0
\end{gather*}
for $0 < a, c$ and $0 < b < n$.
There exists a similar example on $F_0[x, y]$.

\vfill \eject

However, we can construct an averaging operator defined by~\eqref{MainProblemEquation2} 
for this~$R$ assuming $\alpha_{sn, 0} = 0$ for all $s > 0$.
So, $\ker R \subsetneq \ker T$ and there is no way to construct 
an operator $T$ without extension the kernel of $R$.

\begin{remark}
If $R \in \mathcal{R}(X)$ ($\mathcal{R}_0(X)$)
and $R(z)w \in \ker R$ for some $z, w \in M(X)$ ($M_0(X)$) such that $z, w \not \in \ker R$,
then we have $\ker R \subsetneq \ker T$ for any averaging operator $T$ defined by~\eqref{MainProblemEquation2}.
Indeed, applying the equality $T(z)T(w) = T(T(z)w)$
and the conditions on the choice of $z,w$, we obtain $R(z)R(w) \neq 0$, $R(R(z)w) = 0$,
hence $\alpha_z \alpha_w = 0$, i.\,e. $z \in \ker T$ or $w \in \ker T$.
\end{remark}

Throughout the work we will use the following notation
to write the corresponding set of operators:
$\mathcal{L}(x) := \mathcal{L}(\{x\})$ and
$\mathcal{L}(x, y) := \mathcal{L}(\{x, y\})$.

\section{Monomial averaging operators on $F[x]$ and $F_0[x]$}

Let us show that monomial RB-operators of weight zero 
and monomial averaging operators defined on a polynomial algebra are very close.
Firstly, we recall the known description of monomial RB-operators of weight zero on $F_0[x]$ and $F[x]$.

\begin{theorem}[\!\!\cite{Monom,MonomNonunital}]\label{RB_on_F[x]_and_F0[x]}
Let $F$ be a field of characteristic zero and
let $R$ be a monomial RB-operator of weight zero on $F_0[x]$ ($F[x]$). 
Then there exist $0 < m$, $p_i \in \mathbb{N}$, and $q_i \in F$, 
$i = 1,\dots, m$ ($i = 0,\dots, m - 1$), such that
$p_i = 0$ if and only if $q_i = 0$,
and $R$ has the form
$$
R(x^{m a+b})= q_b \frac{x^{m(a+p_b)}}{m(a+p_b)},
$$
where $a \in \mathbb{N}$ and $0<b \leqslant m$ ($0 \leqslant b<m$).
\end{theorem}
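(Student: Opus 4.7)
The plan is to parameterise $R(x^n) = c_n x^{f(n)}$ for $n \in S := \{n : c_n \neq 0\}$ (with $n \geq 0$ in the $F[x]$ case and $n \geq 1$ in $F_0[x]$) and to read off the periodic structure directly from~\eqref{RBO}. Substituting $a = b = x^n$ into~\eqref{RBO} yields
\[
c_n^2 x^{2f(n)} = 2 c_n c_{n+f(n)} x^{f(n+f(n))},
\]
so for $n \in S$ one has $n + f(n) \in S$, $c_{n+f(n)} = c_n/2$, and $f(n+f(n)) = 2f(n)$. In particular $f(n) \geq 1$, hence $1 \notin \Imm R$, consistent with Lemma~\ref{RB_R(1)_in_F}.

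Set $n_0 := \min S$ and $m := f(n_0)$. Iterating~\eqref{RBO} on the pairs $(x^{n_0}, x^{n_0+km})$ I would prove by induction on $k$ that $n_0 + km \in S$ with $f(n_0+km) = (k+1)m$ and $c_{n_0+km} = c_{n_0}/(k+1)$: both summands on the right-hand side collapse onto the single monomial $x^{n_0+(k+1)m}$, giving the scalar recurrence $(c_{n_0} + c_{n_0+km})\, c_{n_0+(k+1)m} = c_{n_0}\, c_{n_0+km}$, which closes the induction and establishes the stated form on the residue class of $n_0$ modulo $m$. For an arbitrary $n \in S$ with $n \not\equiv n_0 \pmod m$, I would apply~\eqref{RBO} to $(x^n, x^{n_0})$ and split into cases according to which of the two summands on the right-hand side vanishes; combined with the already-known behaviour on the class of $n_0$, this forces $m \mid f(n)$, $n+m \in S$, $f(n+m) = f(n)+m$, and the analogous recurrence on the sequence $c_{n+jm}$. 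Iterating in $j$ delivers the formula $R(x^{ma+b}) = q_b x^{m(a+p_b)}/(m(a+p_b))$ on each residue class $b$ intersecting $S$ with a unique pair $(p_b, q_b)$; classes disjoint from $S$ contribute $p_b = q_b = 0$, matching the condition ``$p_b = 0$ iff $q_b = 0$''. The only difference between $F[x]$ and $F_0[x]$ is the admissible range of~$b$.

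The main obstacle is the ``asymmetric'' sub-case in which only one of the two summands on the right-hand side of~\eqref{RBO} for $(x^n, x^{n_0})$ contributes: this branch would force $c_{n_0 + f(n)} = c_{n_0}$, which in view of the formula already established on the class of $n_0$ (namely $c_{n_0+km} = c_{n_0}/(k+1)$) is only possible when $f(n) = 0$, contradicting $f(n) \geq 1$. Ruling out this branch leaves the symmetric case, from which the full periodic classification follows by induction on the minimal representative of each residue class modulo~$m$.
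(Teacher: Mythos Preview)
The paper does not give its own proof of this theorem; it is quoted from~\cite{Monom,MonomNonunital}. The paper does, however, reproduce the analogous argument for averaging operators in Theorem~\ref{Averaging_on_F[x]_and_F0[x]}, and that argument reveals a genuine gap in your proposal.

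Your choice $m:=f(n_0)$ is wrong in general. Take the operator on $F_0[x]$ given by the theorem with $m_{\mathrm{true}}=2$, $p_1=3$, $p_2=1$, and $q_1,q_2\neq0$, so
\[
R(x^{2a+1})=\frac{q_1}{2(a+3)}\,x^{2(a+3)},\qquad
R(x^{2a+2})=\frac{q_2}{2(a+1)}\,x^{2(a+1)}.
\]
Here $S=\{1,2,3,\dots\}$, $n_0=1$, and $f(n_0)=6$, so your $m=6$. But $f(2)=2$, which is \emph{not} a multiple of~$6$. Hence the assertion ``$m\mid f(n)$ for all $n\in S$'' that you rely on in the cross-class analysis is simply false for your~$m$, and no amount of case-splitting on the identity for $(x^n,x^{n_0})$ will recover it. Concretely, with $n=2$ both summands $R(c_2x^{n_0+f(2)})=c_2R(x^3)$ and $R(c_1x^{n+m})=c_1R(x^8)$ are nonzero and land on $x^8$, so you are in the ``symmetric'' branch, yet $6\nmid f(2)$.

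Your dismissal of the asymmetric branch is also circular: from $c_{n_0+f(n)}=c_{n_0}$ you invoke the formula $c_{n_0+km}=c_{n_0}/(k+1)$, but that formula is only available once you already know $m\mid f(n)$, which is precisely what is at stake.

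The argument in~\cite{MonomNonunital} (mirrored in the paper's proof of Theorem~\ref{Averaging_on_F[x]_and_F0[x]}) avoids this by defining $m$ as the $\gcd$ of the degrees occurring in $\Imm R$, so that $m\mid f(n)$ holds by construction; one then uses closure of $\Imm R$ under products to get $x^{mk}\in\Imm R$ for all large~$k$, and Lemma~\ref{ker(R)IsIm(R)Module} to show that $\ker R$ is a union of entire residue classes modulo~$m$. Your recurrence on the class of~$n_0$ survives with this corrected~$m$, but the rest of the argument has to be rebuilt on that foundation.
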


Now, we describe monomial averaging operators on $F_0[x]$ ($F[x]$), 
following the proof of Theorem~\ref{RB_on_F[x]_and_F0[x]} from~\cite{MonomNonunital}.
In the case of averaging operators, unlike of Rota---Baxter ones, 
we may deal with a field of any characteristic.

\begin{theorem}\label{Averaging_on_F[x]_and_F0[x]}
a) Let $T$ be a monomial averaging operator on $F_0[x]$.
Then there exist $0 < m$, $p_i \in \mathbb{N}$, and $q_i \in F$, 
$i = 1,\dots, m$, such that if $p_i = 0$, then $q_i = 0$,
and~$T$ has the form
$$
T(x^{m a + b})= q_b x^{m(a + p_b)},
$$
where $a \in \mathbb{N}$ and $0 < b \leqslant m$.

b) Let $T$  be a monomial averaging operator on $F[x]$.
Then either there exist $0 < m$, $p_i \in \mathbb{N}$ and $q_i \in F$, 
$i = 0,\dots, m - 1$, such that $T$ has the form
$$
T_1(x^{m a + b})= q_b x^{m(a + p_b)},
$$
where $a \in \mathbb{N}$ and $0 \leqslant b < m$,
or there exist $q_n \in F$, $0\leqslant n$, such that $T$ has the form
$$
T_2(x^n) = q_n.
$$
\end{theorem}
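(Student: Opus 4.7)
The plan is to mimic the proof strategy for monomial RB-operators from [MonomNonunital], but with the averaging axiom replacing the Rota--Baxter one. Since $T$ is monomial, I write $T(x^n)=\alpha_n x^{\tau(n)}$ (with $\tau(n)$ defined only when $\alpha_n\neq 0$) and put $E=\{n:\alpha_n\neq 0\}$. Applying $T(x^i)T(x^j)=T(x^iT(x^j))=T(T(x^i)x^j)$ for $i,j\in E$ gives
$$\alpha_i\alpha_j\,x^{\tau(i)+\tau(j)}=\alpha_i T(x^{j+\tau(i)})=\alpha_j T(x^{i+\tau(j)}),$$
which forces $i+\tau(j), j+\tau(i)\in E$ with $\alpha_{i+\tau(j)}=\alpha_i$, $\alpha_{j+\tau(i)}=\alpha_j$ and $\tau(i+\tau(j))=\tau(j+\tau(i))=\tau(i)+\tau(j)$. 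In particular the image-degree set $S:=\tau(E)$ is closed under addition.

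Next I handle the degenerate cases. If $E=\emptyset$ then $T=0$ and every $q_b$ (and hence every $p_b$) vanishes. For part b), the possibility $S=\{0\}$ means $T$ lands in $F\cdot 1$, and a direct calculation shows $T(x^n)=q_n$ is an averaging operator for any choice of $q_n\in F$; this is the operator $T_2$. In all remaining cases $S$ contains a positive integer, and I set $m:=\min(S\cap\mathbb{Z}_{>0})$. Choosing $n_0\in E$ with $\tau(n_0)=m$ and applying the relation from Step~1 with $j=n_0$ yields the shift rule
$$\alpha_{i+m}=\alpha_i,\qquad \tau(i+m)=\tau(i)+m,\qquad i\in E.$$
An iteration argument using that $S$ is an additive submonoid of $\mathbb{N}$ containing $m$ (together with minimality of $m$) forces every element of $S$ to be a multiple of $m$; hence $\tau(E)\subseteq m\mathbb{N}$.

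Writing any $n\in E$ as $n=ma+b$ with $0<b\leqslant m$ (resp.\ $0\leqslant b<m$ on $F[x]$), the shift rule shows that $\alpha_n$ and $\tau(n)-ma$ depend only on the residue $b$; I set $q_b:=\alpha_n$ and $mp_b:=\tau(n)-ma$, extending $q_b=p_b=0$ for residues outside the support. This yields the formula $T(x^{ma+b})=q_b x^{m(a+p_b)}$, and on $F_0[x]$ the implication $p_b=0\Rightarrow q_b=0$ is immediate from $\tau(n)\geqslant m>0$ on $E$. Conversely, any operator of the stated shape satisfies the averaging identity by a direct mod-$m$ computation on monomials. Part b) combines this $T_1$-family with the $T_2$-family constructed in the second paragraph. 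The main technical obstacle is Step~3 — extracting $m$ as the period and proving $S\subseteq m\mathbb{N}$ from the two relations $S+S\subseteq S$ and $\tau(i+m)=\tau(i)+m$ — which parallels the corresponding step in [MonomNonunital, Thm.~4.1]; the absence of the denominator $1/(a+p_b)$ present in the RB classification is exactly what allows characteristic-free validity here.
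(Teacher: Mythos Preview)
Your choice $m=\min(S\cap\mathbb{Z}_{>0})$ is wrong, and the claimed implication ``$S$ is an additive submonoid with minimum positive element $m$, hence $S\subseteq m\mathbb{N}$'' is false in general and remains false even together with your shift rule. A concrete counterexample on $F_0[x]$ is $T(x^n)=qx^{n+1}$ for all $n\geqslant 1$: this is an averaging operator with $E=\{1,2,\ldots\}$, $\tau(n)=n+1$, and $S=\{2,3,4,\ldots\}$, so your $m$ equals $2$ but $3\in S$ is not a multiple of $2$. Both relations you list---$S+S\subseteq S$ and $\tau(i+2)=\tau(i)+2$---hold here, so they cannot force $S\subseteq 2\mathbb{N}$. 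The operator does fit the theorem's formula, but with $m=1$ and $p_1=2$, not with $m=2$.

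The correct period is $m=\gcd(S\cap\mathbb{Z}_{>0})$, which is what the paper uses in its direct proof of part~b); with that choice $S\subseteq m\mathbb{N}$ is automatic. To see that $E$ and the data $(\alpha,\tau)$ are genuinely $m$-periodic you also need one relation you never wrote down: applying the averaging identity with $i\in E$ and $j\notin E$ gives $T(T(x^i)x^j)=0$, hence $\tau(i)+j\notin E$; equivalently, $n\in E$ and $n-s$ a valid index with $s\in S$ force $n-s\in E$. This backward closure, together with your forward shift $\tau(i+s)=\tau(i)+s$ for every $s\in S$ (not just for the minimum), is what makes $E$ a union of residue classes modulo $\gcd(S)$ with $\tau(n)-n$ constant on each class. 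For comparison, the paper's proof of part~a) over a field of characteristic zero avoids redoing this period argument entirely: it builds an RB-operator from $T$ via Lemma~\ref{RB_by_lin_op_lemma} and then invokes the already-known RB classification (Theorem~\ref{RB_on_F[x]_and_F0[x]}).
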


\begin{proof}
It is easy to see that the operators presented in the formulation of Theorem are indeed averaging ones.

For the proof of a), one can repeat the proof of Theorem~\ref{RB_on_F[x]_and_F0[x]}
in the case of a field~$F$ of an arbitrary characteristic.
For completeness we provide another proof in the case of a~field of characteristic zero, 
which, however, involves the result of Theorem~\ref{RB_on_F[x]_and_F0[x]}.

a) By Remark~\ref{averaging_operator_remark1}, 
we may apply Lemma~\ref{RB_by_lin_op_lemma} for~$T$.
So, we obtain an RB-operator~$R$ on $F_0[x]$ constructed by $T$.
Note that $x^n \in \ker T$ if and only if $x^n \in \ker R$.
Applying Theorem~\ref{RB_on_F[x]_and_F0[x]}, 
the operator $T$ necessarily takes the following form: 
$$ 
T(x^{ma + b}) 
= \begin{cases}
q_{a, b}x^{m(a + p_b)}, & x^{ma + b} \not\in \ker R, \\
0, & x^{ma + b} \in \ker R,
\end{cases}
$$
where $0<m$ is fixed, $q_{a,b} \neq 0$, $0 \leqslant a$, $ 0 < b \leqslant m$, and $0 < p_b$.
By~\eqref{Averaging}, we have the following restrictions on coefficients:
\begin{gather}\label{r_c,d=r_a+c+pb+d}
q_{c, d} = q_{a + c + p_b, d}, 
\,\, 0 \leqslant a, c, \,\, 0 < b, d \leqslant m.
\end{gather}
Introduce the following notation:
$$
P = \{ b \mid p_b > 0\}, 
\quad p_{\min} = \min_{i \in P} p_i.
$$
If $P = \emptyset$, then $R = 0$ and so $T = 0$.
Let $P \neq \emptyset$.
We consider~\eqref{r_c,d=r_a+c+pb+d} 
with $a = 0$, $d\in P$, $b$ such that $p_b = p_{\min}$
and obtain $q_{c, d} = q_{c + p_{\min}, d}$ for all $c\geqslant 0$. 
Let us take~$l$ from the interval $1 \leqslant l \leqslant p_{\min}$. 
Applying the last formula, we derive 
$q_{l, d} = q_{l + kp_{\min}, d}$ for all $d \in P$ and $k \geqslant 0$.
Substituting $p_b = p_{\min}$ in~\eqref{r_c,d=r_a+c+pb+d}, we obtain
$q_{c, d} = q_{a + c + p_{\min}, d}$, hence for $c = 1$ and any $1 \leqslant a < p_{\min} - 1$
we obtain $q_{1, d} = q_{a + 1 + p_{\min}, d} = q_{a + 1, d}$.
Therefore, $q_{i, d} = q_{j, d}$ for all $i, j \geqslant 0$.
We have received the required operator.

b) Denote $T(x^n) = q_n x^{t_n}$, where $q_n \in F$ and $t_n \geqslant 0$
for $n \geqslant 0$.
Let us exclude the case when $\Imm T \subseteq F$,
such an operator coincides with $T_2$.

Let us prove the following formula for $k \geqslant 0$ by induction on~$k$:
\begin{equation}\label{xn^k+1=xn^k_and_xn}
    (T(x^n))^{k + 1} = T((T(x^n))^k x^n).
\end{equation}
The base case $k = 0$ is trivial.
The inductive step follows from the equalities:
$$
(T(x^n))^kT(x^n) \stackrel{\eqref{Averaging}}{=}
T((T(x^n))^{k - 1} x^n)T(x^n) \stackrel{\eqref{xn^k+1=xn^k_and_xn}}{=}
T((T(x^n))^k x^n).
$$

Let $s$ be such that $T(x^s) = q_s x^{t_s}$, $q_s \neq 0$ and $t_s > 0$.
Then $m = \gcd (\{t \mid T(x^t) \not \in F\})$ is correctly defined,
i.\,e. for any $x^r \in \Imm T$ we have $r = am$ for some $a \geqslant 0$. 
On the other hand, there exists $N \geqslant 0$ such that
$\Imm T \supset \Span (\{x^{m k} \mid k \geqslant N \})$.
We will use the following notation throughout the proof:
$q_{a, b} = q_{ma + b}$, $t_{a, b} = t_{ma + b}$,
where $T(x^{ma + b}) = q_{ma + b} x^{mt_{ma + b}}$.

Consider $0 \leqslant b < m$ such that $T(x^{ma + b}) = 0$ for some $a \geqslant 0$.
Now we prove that $T(x^{mc + b}) = 0$ for all $c \geqslant 0$.
By Lemma~\ref{closure_and_modules_lemma}, $\ker T$ is an $\Imm T$-bimodule,
it implies that $T(x^{mc + b}) = 0$ for all $c \geqslant a + N$.
Suppose that $T(x^{mc + b}) = q_{c, b} x^{md}$, $q_{c, b} \neq 0$, $d > 0$,
for some $c < a + N$.
By~\eqref{xn^k+1=xn^k_and_xn}, we have
$$
q_{c, b}^{k + 1} x^{md(k + 1)}  = (T(x^{mc + b}))^{k + 1} = 
T((T(x^{mc + b}))^k x^{mc + b}) = T(q_{c, b}^k x^{m(dk + c) + b}).
$$
Taking $k \geqslant 0$ such that $dk + c \geqslant N + a$, we obtain a contradiction.
We also need to consider the case when $T(x^{mc + b}) = q_{c, b} \in F\setminus \{0\}$.
Let $k \geqslant 0$ be such that $T(x^k) = \gamma x^{m(a + N)}$, where $\gamma \neq 0$.
Then by~\eqref{Averaging} we again obtain a contradiction:
$$
0 \neq 
q_{c, b}\gamma x^{m(a + N)} = 
T(x^{mc + b})T(x^k) = 
T(x^{mc + b}T(x^k)) =
T(x^{m(c + a + N) + b}) = 0.
$$
Thus $T(x^{ma + b}) = 0$ for some $a \geqslant 0$
if and only if $T(x^{mc + b}) = 0$ for every $c \geqslant 0$.

Let $0 \leqslant b < m$ be such that $T(x^b) = q_{0, b} x^{mt_{0, b}}$, 
$q_{0, b} \neq 0$, $t_{0, b} \geqslant 0$.
We want to prove that $R(x^{ma + b}) = q_{0, b} x^{m(a + t_{0, b})}$ for all $a \geqslant 0$.
Consider the case $a \geqslant N$ at first. 
Since $x^{ma} \in \Imm T$, there exists $k$ such that $T(x^k) = \gamma x^{ma}$, $\gamma \neq 0$.
Hence
$$
q_{0, b}\gamma x^{m(a + t_{0, b})} = T(x^b)T(x^k) = T(x^b T(x^k)) = \gamma T(x^{ma + b}).
$$
Now we consider the case $0 < a < N$. 
We have $T(x^{ma + b}) = q_{a, b} x^{m r}$ 
for some $r \geqslant 0$ depending on $a$.
From the previous part of the proof we know that $q_{a,b} \neq 0$.
The following expressions have to be equal to each other,
since $T$ is an averaging operator:
\begin{gather} \label{Averaging_on_F[x]_and_F0[x]_last_eq}
\begin{gathered}
T(x^b)T(x^{ma + b}) = q_{0, b}q_{a, b} x^{m(t_{0, b} + r)}, \quad 
T(T(x^b)x^{ma + b}) = q_{0, b} T(x^{m(a + t_{0, b}) + b}), \\
T(x^b T(x^{ma + b})) = q_{a, b} T(x^{mr + b}). 
\end{gathered}
\end{gather}
So, we obtain $T(x^{m(a + t_{0, b}) + b}) = q_{a, b} x^{m(t_{0, b} + r)}$ and
$T(x^{mr + b}) = q_{0, b} x^{m(t_{0, b} + r)}$.
We have already considered the case $r = a + t_{0, b}$.
Suppose that $r \neq a + t_{0, b}$.
By~\eqref{Averaging_on_F[x]_and_F0[x]_last_eq},
$q_{0, b}x^{m(a + t_{0, b}) + b} - q_{a, b}x^{mr + b} \in \ker T$.
Multiplying this expression by $x^{mN}\in \Imm T$ and then acting by~$T$, we get
$$
0 = T(q_{0, b}x^{m(a + t_{0, b} + N) + b} - q_{a, b}x^{m(r + N) + b}) 
 = q_{0, b}^2 x^{m(a + 2t_{0, b} + N)} - q_{a, b}q_{0, b} x^{m(r + t_{0, b} + N)}.
$$
It is a contradiction, since $q_{0, b}, q_{a, b} \neq 0$
and the degrees of the monomials are not equal.

The operator $T$ is completely defined by its action on monomials $x^a$,
$0 \leqslant a < m$, and $T$ coincides with $T_1$.
\end{proof}

In the light of Lemmas~\ref{RB_by_lin_op_lemma}
and~\ref{Averaging_by_lin_op_lemma}, the description
of monomial averaging operators and RB-operators on $F_0[x]$ 
coincide up to division by the denominator.
On $F[x]$ we have a difference between monomial RB-operators and monomial averaging ones, 
since the unit may lie in the image of an averaging operator,
but not in the image of an RB-operator.

In particular, over a field of characteristic zero
Question~\ref{AdditionalProblem} has a positive answer in a) and b) in the case of $F_0[x]$.
The answer to a) is positive and to b) is negative in the case of $F[x]$.
At the same time, if we exclude from the set~$\mathcal{T}(x)$ all monomial averaging operators
having the unit in its image, then for this subset the question analogous to~\ref{AdditionalProblem}
has a positive answer in a) and b).

\section{Non-increasing operators on $F_0[x, y]$}

In the current section we study Problem~\ref{AdditionalProblem}
on the polynomial algebra in two variables with some restrictions on operators.
 
Below, we will use the notation $k \equiv_n l$
to denote the number $k$ is congruent to $l$ by modulo $n$.

\begin{definition}
Let $A = \bigoplus\limits_{n \in \mathbb{Z}} A_n$ 
be a graded algebra and let $L$ be a linear operator on $A$.
We say that $L$ is non-increasing (in degree) if
$L(A_i) \subseteq \bigoplus\limits_{j \leqslant i} A_j$ for all $i \in \mathbb{Z}$.
\end{definition}

A linear monomial operator $L$ on $F[x, y]$ ($F_0[x, y]$)
is non-increasing (in degree) if for the action 
$L(x^n y^m) = \alpha_{n, m} x^s y^t$,
where $\alpha_{n, m} \in F$, $n, m \geqslant 0$ ($n + m > 0$),
we have $s + t \leqslant n + m$.

Let us set the general property that a monomial RB-operator of weight zero on
the polynomial algebra can not map monomials to monomials of lower degree arbitrarily.

\begin{lemma} \label{lem:ZeroDecreaseDegree}
Let $X = \{ x_i \mid i \in I\}$, $I \neq \emptyset$,
and let $R$ be a monomial RB-operator of weight zero on $F_0[X]$ ($F[X]$).
Then for any nonempty subset $J \subseteq I$ the following can not happen on $F[X]$:
$R\big(\prod_{i \in J} x^{a_i}_i\big) = \alpha \prod_{i \in J} x^{b_i}_i$, where
$\prod_{i \in J} x^{a_i}_i \in M_0(X)$ ($M(X)$), 
$\alpha \neq 0$, $a_i \geqslant b_i \geqslant 0$ for all $i \in J$, and there exists 
$k \in J$ such that $a_k > b_k$.
\end{lemma}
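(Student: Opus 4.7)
The plan is to argue by contradiction using~\eqref{RBO} and Lemma~\ref{ker(R)IsIm(R)Module}. Suppose $R\bigl(\prod_{i\in J} x_i^{a_i}\bigr) = \alpha \prod_{i\in J} x_i^{b_i}$ with $\alpha \in F^*$, $a_i \geqslant b_i$ for all $i \in J$, and $a_k > b_k$ for some $k \in J$. Abbreviate $z = \prod_{i\in J} x_i^{a_i}$ and $w = \prod_{i\in J} x_i^{b_i}$, and factor $z = wu$, where $u = \prod_{i\in J} x_i^{a_i - b_i}$. Because $a_k > b_k$, the monomial $u$ is nonconstant, so $u \in M_0(X)$.

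First I would dispose of the case $u \in \ker R$. Here $w = \alpha^{-1} R(z) \in \Imm R$, so by Lemma~\ref{ker(R)IsIm(R)Module} we have $z = wu \in \Imm R \cdot \ker R \subseteq \ker R$, contradicting $R(z) = \alpha w \neq 0$. Thus I may assume $R(u) = \beta v$ with $\beta \in F^*$ and $v$ some monomial.

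The main step is to apply~\eqref{RBO} to the pair $(z, u)$ and read off $R(zv)$:
$$
\alpha\beta\, wv = R(z) R(u) = R\bigl(R(z)\,u + z\,R(u)\bigr) = \alpha R(z) + \beta R(zv) = \alpha^2 w + \beta R(zv),
$$
whence $R(zv) = \alpha\, wv - (\alpha^2/\beta)\, w$. Provided $v$ is nonconstant, $wv$ and $w$ are distinct monomials and the two coefficients on the right are nonzero, so $R(zv)$ cannot equal a single scalar multiple of a monomial, contradicting the monomiality of $R$.

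The main obstacle is the borderline subcase $v = 1$, which can occur only on $F[X]$. I plan to settle it by specializing~\eqref{RBO} to the self-pair $(u, u)$: then $\beta^2 = R(u)^2 = 2\beta R(u) = 2\beta^2$ forces $\beta^2 = 0$ and hence $\beta = 0$, contradicting $\beta \neq 0$ in every characteristic. On $F_0[X]$ the value $v = 1$ is excluded from the outset, so the same argument covers both algebras uniformly.
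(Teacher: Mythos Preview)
Your proof is correct and follows essentially the same route as the paper: write $z=wu$ with $u=\prod x_i^{a_i-b_i}$, apply~\eqref{RBO} to the pair $(z,u)$, and use monomiality to force $R(u)\in F\cdot 1$, then rule that out. The only cosmetic differences are that the paper reads the contradiction for $R(u)=0$ directly off the displayed identity (rather than invoking Lemma~\ref{ker(R)IsIm(R)Module}), and for the border case $R(u)\in F^*\cdot 1$ it appeals to the general fact $1\notin\Imm R$ (the remark after Lemma~\ref{RB_R(1)_in_F}) instead of your self-pairing $(u,u)$ computation; both amount to the same one-line check.
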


\begin{proof}
Assume the converse, let
$R\big(\prod_{i \in J} x^{a_i - b_i}_i\big) 
= \beta \prod_{i \in J} x^{c_i}_i$, $\beta \in F$. 
By~\eqref{RBO} we have
\begin{multline*}
R\Bigl(\prod_{i \in J} x^{a_i}_i\Bigl) 
R\Bigl(\prod_{i \in J} x^{a_i - b_i}_i\Bigl) = 
\alpha \beta \prod_{i \in J} x^{b_i}_i \cdot 
\prod_{i \in J} x^{c_i}_i \\ =
R\Bigl(\alpha \prod_{i \in J} x^{b_i}_i \cdot 
\prod_{i \in J} x^{a_i - b_i}_i + 
\beta \prod_{i \in J} x^{a_i}_i 
\prod_{i \in J} x^{c_i}_i\Bigl) =
\alpha^2 \prod_{i \in J} x^{b_i}_i + 
\beta R\Bigl(\prod_{i \in J} x^{a_i + c_i}_i\Bigl).
\end{multline*}
If $\beta = 0$, then we arrive at a~contradiction, since $\alpha \neq 0$.
If $\beta \neq 0$, then 
$
\prod_{i \in J} x^{b_i + c_i}_i =
\prod_{i \in J} x^{b_i}_i
$
and~$c_i = 0$ for all $i \in J$, i.\,e. $1\in \Imm R$.
In the case of $F_0[X]$, we have no unit, a~contradiction.
In the case of $F[X]$, this contradicts Lemma~\ref{RB_R(1)_in_F}.
\end{proof}

\begin{remark}  \label{remark:ZeroDecreaseDegree}
Let $X$ be a nonempty set.
A similar to Lemma~\ref{lem:ZeroDecreaseDegree} result for 
monomial averaging operators holds only on $F_0[X]$.
In the case of $F[X]$, the analogue of Lemma~\ref{lem:ZeroDecreaseDegree} 
does not hold, since for polynomials even in one variable
there exists an averaging operator~$T$ such that 
$T(x^n) = \alpha x^{n - c}$ for some $0 < c \leqslant n$ and $\alpha \in F\setminus \{ 0 \}$
(see Theorem~\ref{Averaging_on_F[x]_and_F0[x]}).
\end{remark}

We can also provide an averaging operator on the algebra $F[x, y]$
for which the analogue of Lemma~\ref{lem:ZeroDecreaseDegree} does not hold.

\begin{example}
The following operator is an averaging one on $F[x, y]$:
$$
T(x^n y^m) = 
\begin{cases}
x^{2s} y^m, & n = 2s + 1, \, m > 0, \\
x^{2s} y^m, & n = 2s, \, m \geqslant 0.
\end{cases}
$$
\end{example}

Let us describe monomial RB-operators of weight zero and averaging operators
that are non-increasing and do not have monomials in the kernel.

Note that the description of monomial non-increasing RB-operators of weight zero on $F[x,y]$
follows from the solution of the same problem formulated on $F_0[x,y]$.
To show this, we apply the correspondence from Lemma~\ref{RB_F0[X]<->F[X]}, which is one-to-one.
The latter is true, since for any monomial non-increasing RB-operator~$R$ of weight zero on $F[x, y]$
we have $R(F) \subseteq F$ and $R(1) = 0$ by Lemma~\ref{RB_R(1)_in_F}.
However, there exist non-increasing RB-operators~$R$ on $F_0[x, y]$ such that $R^2 \neq 0$.

Given a monomial non-increasing averaging operator~$T$ on $F[x,y]$,
it may happen that $F_0[x,y]$ is not $T$-invariant, e.\,g. when $1 \in \Imm T$.
Thus, the correspondence from Lemma~\ref{Averaging_F0[X]<->F[X]}
is not one-to-one for monomial non-increasing averaging operators.

\begin{example} \label{not_increase_degree_operator_example}
An operator $T$ on $F[x,y]$ defined by the formula
$T(x^n y^m) = \alpha_n y^m$, where $\alpha_n \in F$, $n, m \geqslant 0$,
is a~monomial non-increasing averaging operator.
If $\alpha_i \neq 0$ for some $i > 0$, 
then there exists $z \in M(\{x,y\})$, $\deg z > 0$, such that $T(z) \in F^*$.
\end{example}

We can not obtain an averaging operator on $F_0[x, y]$ 
applying Lemma~\ref{Averaging_F0[X]<->F[X]} to the operator $T$ from Example~\ref{not_increase_degree_operator_example},
since $F_0[x, y]$ is not $T$-invariant.
We also can not use Lemma~\ref{RB_by_lin_op_lemma} if $1 \in \Imm T$.
To apply both mentioned above statements, 
we will describe averaging operators~$T$ on $F[x,y]$ such that $F_0[x,y]$ is $T$-invariant.

\begin{lemma} \label{RB_u->u,v->v}
Let $A$ be an algebra over a field~$F$ of characteristic zero and let $R$ be an RB-operator of weight zero on $A$.
For any $u,v\in A$ such that $R(u) = \alpha u$, $R(v) = \beta v$, $\alpha, \beta\in F^*$,
the following formula holds:
\begin{equation*}
R(u^n v^m) = \dfrac{\alpha\beta}{m\alpha + n\beta}u^n v^m.
\end{equation*}
\end{lemma}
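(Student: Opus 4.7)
The plan is a straightforward double induction on $n+m$, using the Rota--Baxter identity to bootstrap from $(n-1,m)$ or $(n,m-1)$ to $(n,m)$.

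First I would verify the base case $n+m=1$: the formula reads $R(u) = \frac{\alpha\beta}{\beta}u = \alpha u$ and $R(v) = \frac{\alpha\beta}{\alpha}v = \beta v$, which are exactly the hypotheses. (It is also instructive to note the purely one-variable specialization: a quick induction on $m$ alone via $R(v)R(v^{m-1})= R(R(v)v^{m-1} + vR(v^{m-1}))$ gives $R(v^m) = \frac{\beta}{m}v^m$, matching the formula at $n=0$; and symmetrically $R(u^n) = \frac{\alpha}{n}u^n$.)

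For the inductive step, assume the formula for all pairs $(n',m')$ with $n'+m' = k$ and take $(n,m)$ with $n+m=k+1$, say $n \geq 1$. Apply~\eqref{RBO} to the pair $u,\, u^{n-1}v^m$ and use $R(u) = \alpha u$ together with the inductive hypothesis $R(u^{n-1}v^m) = \frac{\alpha\beta}{m\alpha+(n-1)\beta}u^{n-1}v^m$. The left-hand side becomes $\frac{\alpha^2\beta}{m\alpha+(n-1)\beta}\, u^n v^m$, while the right-hand side equals $\left(\alpha + \frac{\alpha\beta}{m\alpha+(n-1)\beta}\right) R(u^n v^m) = \frac{\alpha(m\alpha+n\beta)}{m\alpha+(n-1)\beta}\, R(u^n v^m)$. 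Solving yields the required $R(u^n v^m) = \frac{\alpha\beta}{m\alpha+n\beta}\,u^n v^m$. If instead $n=0$ and $m\geq 1$, one repeats the argument using the pair $v,\, v^{m-1}$.

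The main subtlety is ensuring the denominators never vanish so that the induction can actually be written down. Since the characteristic is zero and $\alpha,\beta\in F^*$, the quantity $m\alpha+n\beta$ is nonzero for all $(n,m)$ with $n+m\geq 1$ precisely when $\beta/\alpha$ is not a nonpositive rational number; under the implicit hypothesis of the lemma that the stated formula makes sense, this is exactly what we need, and the inductive denominator $m\alpha+(n-1)\beta$ is then automatically nonzero as well. Beyond this bookkeeping, the proof is essentially a one-line application of~\eqref{RBO}.
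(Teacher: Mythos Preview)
Your double induction on $n+m$ is a workable route, but you mishandle precisely the subtlety you flag: the nonvanishing of $m\alpha+n\beta$ is \emph{not} an ``implicit hypothesis'' on $\alpha,\beta$---it is a consequence of the Rota--Baxter identity itself and must be derived, not assumed. As written, by stipulating that $\beta/\alpha\notin\mathbb{Q}_{\leqslant 0}$, you are proving a strictly weaker statement than the lemma asserts.

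The paper takes a different and shorter route that makes this automatic. After recording $R(u^n)=\alpha u^n/n$ and $R(v^m)=\beta v^m/m$ (which you also note), it applies~\eqref{RBO} once to the pair $u^n,\,v^m$ for $n,m\geqslant 1$ to obtain
\[
\left(\frac{\alpha}{n}+\frac{\beta}{m}\right)R(u^n v^m)=\frac{\alpha\beta}{nm}\,u^n v^m.
\]
If $m\alpha+n\beta=0$ the left-hand side vanishes, forcing $\alpha\beta\,u^n v^m=0$; since $\alpha,\beta\in F^*$ (and $u^n v^m\neq 0$ in the intended setting, e.g.\ a polynomial algebra), this is impossible. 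Hence $m\alpha+n\beta\neq 0$ \emph{a posteriori}, and one simply divides.

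Your induction can be repaired in the same spirit: the equation you derive before dividing is $\alpha\beta\,u^n v^m=(m\alpha+n\beta)\,R(u^n v^m)$, and exactly the same argument forces $m\alpha+n\beta\neq 0$. So the fix is not to add a hypothesis but to observe that the RB relation itself excludes the bad case. The paper's single application of~\eqref{RBO} to $(u^n,v^m)$ both shortens the argument and makes this point transparent; your step-by-step induction works but obscures it.
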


\begin{proof}
The equalities $R(u^k) = \alpha u^k / k$ and $R(v^k) = \beta v^k / k$
for $k>0$ follow from~\eqref{RBO} by induction.
Considering~\eqref{RBO} with $R(u^n)R(v^m)$, we obtain
$$
\left(\frac{\alpha}{n} + \frac{\beta}{m}\right)R(u^n v^m) 
 = \frac{\alpha\beta}{nm}u^n v^m, \,\, m + n > 0,\,\, m, n \geqslant 0.
$$
If $m\alpha + n\beta = 0$, then $\alpha\beta = 0$, a contradiction with $\alpha, \beta \neq 0$.
Hence $m\alpha + n\beta \neq 0$. Dividing on this expression, we prove the required formula.
\end{proof}

Let us define an automorphism $\psi_{x,y}$ of $F[x,y]$ ($F_0[x, y]$) as
$\psi_{x,y}(x) = y$, $\psi_{x,y}(y) = x$.

\begin{theorem} \label{RB_not_incr_and_ker=0}
Let $F$ be a field of characteristic zero
and let $R$ be a monomial non-increasing RB-operator of weight zero on $F_0[x, y]$ 
that does not have monomials in the kernel.
Then $R$ up to conjugation with $\psi_{x, y}$
coincides with one of the operators $R_1,R_2$, where
\begin{equation} \label{RB_not_incr_and_ker=0_solution1}
R_1(x^{sn + k} y^m) = 
\begin{cases}
\frac{(k - a_k)\beta_k\beta_n}{s\beta_0 + (m + k - a_k)\beta_n}x^{sn} y^{m + k - a_k}, 
& 0 < k < n, \, 0 \leqslant s, m,\\
\frac{\beta_0\beta_n}{s\beta_0 + m\beta_n}x^{sn} y^m, 
& k = 0, \, 0 \leqslant s, m,\, 0 < s + m,
\end{cases}
\end{equation}
$0 < n$,
$0 \leqslant a_k < k$ for $0 < k < n$
and $\beta_0, \ldots, \beta_{n - 1} \in F^*$;
\begin{equation} \label{RB_not_incr_and_ker=0_solution2}
R_2(x^k y^m) = 
\begin{cases}
\frac{(k - a_k)\beta_k}{m + k - a_k}y^{m + k - a_k}, & k > 0, \, m \geqslant 0,\\
\frac{\beta_0}{m}y^m, & k = 0, \, m > 0,
\end{cases}
\end{equation}
$0 \leqslant a_k < k$ for $0 < k$ and $\beta_i \in F^*$ for $i \geqslant 0$.
\end{theorem}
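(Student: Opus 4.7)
The plan is to first constrain $R(x)$ and $R(y)$, use $\psi_{x,y}$-conjugation to reduce cases, and then propagate via~\eqref{RBO} to all monomials. Since $R$ is non-increasing and has no monomials in the kernel, both $R(x)$ and $R(y)$ are nonzero monomials of degree at most~$1$ in $F_0[x,y]$, hence of degree exactly~$1$. Therefore $R(x) \in \{\alpha x, \alpha y\}$ and $R(y) \in \{\beta x, \beta y\}$ with $\alpha, \beta \in F^*$. The ``swap'' case $R(x) = \alpha y$, $R(y) = \beta x$ is ruled out since $R(x)^2 = 2R(xR(x))$ and $R(y)^2 = 2R(yR(y))$ yield $R(xy) = \alpha y^2/2 = \beta x^2/2$, impossible for distinct monomials. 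The case $R(x) = \alpha x$, $R(y) = \beta x$ transforms under $\psi_{x,y}$ into $R(x) = \beta y$, $R(y) = \alpha y$. Thus, up to $\psi_{x,y}$, only two cases remain: (A) $R(x) = \alpha x$, $R(y) = \beta y$, and (B) $R(x) = \alpha y$, $R(y) = \beta y$.

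In case (A), Lemma~\ref{RB_u->u,v->v} with $u = x$, $v = y$ gives $R(x^s y^m) = \alpha\beta x^s y^m/(m\alpha + s\beta)$ for $s + m > 0$, matching~\eqref{RB_not_incr_and_ker=0_solution1} with $n = 1$, $\beta_0 = \beta$, $\beta_1 = \alpha$. Case (B) is the main work. Define $n = \min\{k > 0 : R(x^k) = \beta_k x^k\}$, setting $n = \infty$ if no such $k$ exists; since $R(x) = \alpha y$, either $n \geqslant 2$ or $n = \infty$. The key step is a strong induction on $1 \leqslant k < n$ showing $R(x^k) = \beta_k y^{k - a_k}$ for some $\beta_k \in F^*$ and $0 \leqslant a_k < k$. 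Writing $R(x^k) = c x^p y^q$ with $p + q \leqslant k$: Lemma~\ref{lem:ZeroDecreaseDegree} rules out $0 < p < k$, $q = 0$; minimality of $n$ rules out $p = k$, $q = 0$; non-increasing rules out $p = k$, $q > 0$; and $R(x^k) \in F_0[x, y]$ rules out $p = q = 0$. The remaining case $0 < p < k$, $q \geqslant 0$ is excluded by expanding $R(x^k) R(y) = R(R(x^k) y + x^k R(y))$ and computing $R(x^p y^{q+1}) = \beta_p r_p y^{r_p + q + 1} / (r_p + q + 1)$ from $R(x^p) = \beta_p y^{r_p}$ (inductive hypothesis) and $R(y^{q+1}) = \beta y^{q+1}/(q+1)$ via the same identity; the resulting equation forces $R(x^k y)$ to equal a nontrivial difference of two distinct monomials (one with $x$-exponent $p > 0$, one pure in $y$), contradicting its monomiality.

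Once the claim is established, two sub-cases arise. If $n = \infty$, all $R(x^k)$ are pure $y$-powers, and $R(x^k) R(y^m) = R(R(x^k) y^m + x^k R(y^m))$ with $R(y^m) = \beta_0 y^m / m$ recovers~\eqref{RB_not_incr_and_ker=0_solution2}. If $n$ is finite, the standard one-variable RB-calculus inside $F[x^n]$ gives $R(x^{sn}) = \beta_n x^{sn}/s$, and Lemma~\ref{RB_u->u,v->v} applied to $u = x^n$, $v = y$ yields the $k = 0$ branch of~\eqref{RB_not_incr_and_ker=0_solution1}. For $0 < k' < n$ and $s, m \geqslant 0$, solving $R(x^{sn}) R(x^{k'}) = R(R(x^{sn}) x^{k'} + x^{sn} R(x^{k'}))$ for $R(x^{sn + k'})$, and then the analogous identity against $R(y^m)$ for $R(x^{sn + k'} y^m)$, produces the remaining branch of~\eqref{RB_not_incr_and_ker=0_solution1}. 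That $R_1$ and $R_2$ indeed satisfy~\eqref{RBO} is a direct check on the explicit formulas. The main obstacle is the inductive step in case (B): excluding mixed monomial values $c x^p y^q$ with $0 < p < k$ requires carefully combining Lemma~\ref{lem:ZeroDecreaseDegree}, the RB identity against $R(y)$, and the pure-$y$-power structure of lower $R(x^j)$.
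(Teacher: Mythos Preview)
Your proof is correct and follows essentially the same route as the paper: the same three-case split on $R(x),R(y)$ up to $\psi_{x,y}$, the same definition of $n$, and the same inductive/minimality argument showing that $R(x^k)$ is a pure $y$-power for $k<n$ by pairing $R(x^k)$ against $R(y^m)$ and using the already-known pure-$y$ form of $R(x^p y^{q+1})$ for $p<k$. The only cosmetic differences are that you spell out the $(p,q)$ subcases more explicitly (with a harmless redundancy at ``$q\geqslant 0$'', since $q=0$ was already covered by Lemma~\ref{lem:ZeroDecreaseDegree}) and that in the finite-$n$ endgame you first isolate $R(x^{sn+k'})$ and then pair with $R(y^m)$, whereas the paper pairs $R(x^{an}y^m)$ with $R(x^k)$ in one step; both orderings yield~\eqref{RB_not_incr_and_ker=0_solution1}.
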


\begin{proof}
One may check that the operators $R_1, R_2$ are indeed RB-operators of weight zero.

By the conditions, $R(x^s y^t) = \beta_{s, t} x^{a_{s,t}} y^{b_{s,t}}$, $\beta_{s, t} \neq 0$,
for any $x^s y^t \in M_0(\{ x, y\})$.

Consider the action of $R$ on monomials of the degree one.
Since $R$ is non-increasing, up to conjugation with $\psi_{x,y}$ we have only three cases.

{\sc Case 1}: $R(x) = \alpha y$ and $R(y) = \beta x$.
Consider the following expressions:
$$
R(x)R(x) = \alpha^2 y^2 = 2\alpha R(xy), \quad
R(y)R(y) = \beta^2 x^2 = 2\beta R(xy).
$$
So, $\alpha y^2 = \beta x^2$, a contradiction.

{\sc Case 2}: $R(x) = \alpha x$ and $R(y) = \beta y$.
By Lemma~\ref{RB_u->u,v->v}, we obtain the operator $R_1$ 
defined by~\eqref{RB_not_incr_and_ker=0_solution1} with $n = 1$.

{\sc Case 3}: $R(x) = \beta_1 y$, $R(y) = \beta_0 y$.
Due to Lemma~\ref{lem:ZeroDecreaseDegree}, 
$R$ can not map $x^n$ or $y^n$ to $x^{n - p}$ or $y^{n - p}$ 
for some $0 < p < n$, respectively.
We will apply this further.

{\sc Step 1}. 
Define $n = \min\{s\mid R(x^s) = \beta_s x^s, \,\, \beta_s \neq 0\}$.
If there are no $s$ such that $R(x^s) = \beta_s x^s$ and $\beta_s \neq 0$, then we put $n = \infty$.
Substituting $u = v = y$ in Lemma~\ref{RB_u->u,v->v}, 
we find that $R(y^m) = \beta_0 y^m / m$, $m > 0$.

{\sc Step 2}. 
Consider arbitrary $k < n$.
Then $R(x^k) = \beta_k x^{k - t} y^l$ for some $0 < t \leqslant k$ and $0 \leqslant l \leqslant t$.
We want to show that $t = k$ and hence $l > 0$.
Suppose that $k$ is a~minimal number such that 
$R(x^k) = \beta_k x^{k - t} y^l$ and $0 < t < k$.
By the choice of~$k$, we have $R(x^s) = \beta_s y^{s - a_s}$ for all $0 < s < k$
and some $0 \leqslant a_s < s$.
Applying~\eqref{RBO} for $R(x^s)$ and $R(y^m)$, where $0 < s < k$, $0 < m$,
we obtain the following formula:
\begin{gather} \label{all_in_y_decrease_degreeformula}
R(x^s y^m) = \frac{(s - a_s)\beta_s}{m + s - a_s}y^{m + s - a_s}, \quad m \geqslant 0.
\end{gather}
Now we rewrite~\eqref{RBO} with the help of~\eqref{all_in_y_decrease_degreeformula}:
\begin{multline*}
R(x^k)R(y^m) = 
(\beta_k \beta_0 / m) x^{k - t} y^{m + l} = 
R(\beta_k x^{k - t} y^{m + l} + (\beta_0 / m )x^k y^m) \\ = 
\frac{(k - t - a_{k - t}) \beta_k \beta_{k - t}}{m + l + k - t - a_{k - t}} 
y^{m + l + k - t - a_{k - t}} + (\beta_0 / m )R(x^k y^m).
\end{multline*}
We see that $R(x^k y^m)$ does not equal a monomial with some coefficient, it is a contradiction. 

{\sc Step 3}. 
We have proved that $R(x^s) = \beta_s y^{s - a_s}$, $0 \leqslant a_s < s$, for all $0 < s < n$.
If $n = \infty$, then the formula~\eqref{all_in_y_decrease_degreeformula} holds for all $s > 0$.
This equality taken jointly with the one $R(y^m) = \beta_0 y^m / m$, $m > 0$,
implies that $R$ coincides with $R_2$.

If $n < \infty$, then $R(x^n) = \beta_n x^n$
and $R(x^{sn}) = \beta_n x^{sn} / s$ by Lemma~\ref{RB_u->u,v->v} for all $s > 0$.
Again by Lemma~\ref{RB_u->u,v->v} we obtain the formula
$R(x^{sn} y^m) 
 = \frac{\beta_0\beta_n}{s\beta_0 + m\beta_n}x^{sn} y^m$.
Applying~\eqref{RBO} for $R(x^{an} y^m)$ and $R(x^k)$, where $0 < k < n$, $0 \leqslant a, m$,
we find that $R$ coincides with $R_1$.
\end{proof}

\begin{remark} \label{theorem_decrease_degree_fractions_remark_about}
Let $L$ be a linear operator defined by~\eqref{RB_not_incr_and_ker=0_solution1},
then $L$ is an RB-operator provided that $\beta_n / \beta_0 \not \in \mathbb{Q}_{<0}$.
\end{remark}

\begin{remark} \label{theorem_decrease_degree_generalization_remark}
We may take $a_k \in \mathbb{Z}$, $a_k < k$, and $\beta_k = 0$ in the formulas~\eqref{RB_not_incr_and_ker=0_solution1}
and~\eqref{RB_not_incr_and_ker=0_solution2}, allowing $k$ be equal to~0.
If we keep conditions from Remark~\ref{theorem_decrease_degree_fractions_remark_about},
then we also get RB-operators of weight zero on $F_0[x,y]$.
Under the conditions, if we have $\beta_0 = 0$, then $R^2 = 0$ and one may apply  Lemma~\ref{RB_F0[X]<->F[X]}.
\end{remark}

Let us consider some examples and generalizations of the RB-operators $R_1,R_2$
obtained in Theorem~\ref{RB_not_incr_and_ker=0}.

\begin{example}
We write down the operator~$R_1$ with $n = 10$, $a_k = k - 1$, $0 < k < 10$,
$\beta_{2k} \neq 0$, $0 \leqslant k \leqslant 5$, and $\beta_{2k + 1} = 0$, $0 \leqslant k \leqslant 4$:
$$
R_1(x^{10s + k} y^m) = 
\begin{cases}
\frac{\beta_k\beta_{10}}{s\beta_0 + (m + 1)\beta_{10}}x^{10s} y^{m + 1}, & 0 < k < 10, \, k \equiv_2 0 , \\
\frac{\beta_0\beta_{10}}{s\beta_0 + m\beta_{10}}x^{10s} y^m, & k = 0, \\
0 & \text{otherwise}.
\end{cases}
$$
\end{example}

\begin{example}
The generalization of the operator $R_2$ (see Remark~\ref{theorem_decrease_degree_generalization_remark}) 
with parameters $\beta_k = 1$ and $a_k = 0$ for $k > 0$ and $\beta_0 = 1$ has the following form:
$$
R_2(x^n y^m) = \begin{cases}
\frac{n}{n+m}y^{n+m}, & (n,m)\neq (0,0), \\
0, & n = m = 0.
\end{cases}
$$
This operator appeared in~\cite{Viellard-Baron}.
\end{example}

\begin{example}
Taking $a_k = -1$ in~\eqref{RB_not_incr_and_ker=0_solution1}
and~\eqref{RB_not_incr_and_ker=0_solution2},
we obtain the following RB-operators:
$$
R(x^{sn + k} y^m) = 
\begin{cases}
\frac{(k + 1)\beta_k\beta_n}{s\beta_0 + (m + k + 1)\beta_n}x^{sn} y^{m + k + 1}, & 0 < k < n, \\
\frac{\beta_0\beta_n}{s\beta_0 + m\beta_n}x^{sn} y^m, & k = 0,
\end{cases}
$$
$$
R(x^k y^m) = 
\begin{cases}
\frac{(k + 1)\beta_k}{m + k + 1}y^{m + k + 1}, & 0 < k, \\
\frac{\beta_0}{m}y^m, & k = 0.
\end{cases}
$$
\end{example}

Now we move on to the description of the similar class of averaging operators~on~$F_0[x, y]$.

\begin{theorem} \label{Averaging_not_incr_and_ker=0}
Let $T$ be a monomial non-increasing averaging operator on $F_0[x, y]$
that does not have monomials in the kernel.
Then $T$ up to conjugation with $\psi_{x, y}$
coincides with one of the operators $T_1, T_2$, where
$$
T_1(x^{sn + k} y^m) = \beta_k x^{sn} y^{m + k - a_k}, 
\,\, 0 \leqslant n, m, \,\, 0 \leqslant k < n, \,\, 0 < sn + k + m,
$$
$a_0 = 0$, $0 \leqslant a_k < k$ for $0 < k < n$
and $\beta_0, \ldots, \beta_{n - 1} \in F^*$;
$$
T_2(x^k y^m) = \beta_k y^{m + k - a_k}, 
\,\, 0 \leqslant k, m, \,\, 0 < k + m,
$$ 
$a_0 = 0$, $0 \leqslant a_k < k$ for $0 < k$ and $\beta_i \in F^*$ for $i \geqslant 0$.
\end{theorem}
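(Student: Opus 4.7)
The plan is to follow closely the proof of Theorem~\ref{RB_not_incr_and_ker=0}, replacing each use of~\eqref{RBO} by~\eqref{Averaging}. After a routine verification that $T_1, T_2$ are averaging operators, I note that since $F_0[x,y]$ contains no constants and $T$ has no monomials in its kernel, each of $T(x), T(y)$ lies in $F^*x \cup F^*y$. Conjugating by $\psi_{x,y}$ if necessary, only three cases remain: (A) $T(x) = \alpha y$, $T(y) = \beta x$; (B) $T(x) = \alpha x$, $T(y) = \beta y$; (C) $T(x) = \beta_1 y$, $T(y) = \beta_0 y$.

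In Case (A), the identity $T(x)T(x) = T(xT(x))$ yields $T(xy) = \alpha y^2$ and symmetrically $T(y)T(y) = T(yT(y))$ gives $T(xy) = \beta x^2$; the equality $\alpha y^2 = \beta x^2$ with $\alpha, \beta\in F^*$ is absurd. In Case (B), $\alpha T(xy) = T(T(x)y) = T(x)T(y) = T(xT(y)) = \beta T(xy)$ together with $xy\notin\ker T$ forces $\alpha = \beta$, and a direct induction on total degree using $T(x)T(x^{s-1}y^m) = T(T(x)x^{s-1}y^m)$ (and its $y$-analogue) gives $T(x^s y^m) = \alpha x^s y^m$ for all $(s,m)\neq (0,0)$, which is $T_1$ with $n = 1$.

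Case (C) carries the real content. First, the induction $\beta_0 T(y^m) = T(yT(y^{m-1})) = T(y)T(y^{m-1}) = \beta_0^2 y^m$ yields $T(y^m) = \beta_0 y^m$ for $m \geq 1$. Define $n := \min\{\,s > 0 \mid T(x^s)\in F^* x^s\,\}$, with $n = \infty$ if no such $s$ exists. The main obstacle is the claim that $T(x^k) = \beta_k y^{k - a_k}$ with $\beta_k \in F^*$, $0 \leq a_k < k$, for every $0 < k < n$. To prove it, I would suppose $k$ is minimal with $T(x^k) = \beta_k x^{k-t} y^l$, $0 < t < k$. By minimality of~$k$, $T(x^j) = \beta_j y^{j - a_j}$ for all $0 < j < k$, and then $T(x^j)T(y^r) = T(x^j T(y^r))$ delivers $T(x^j y^r) = \beta_j y^{r + j - a_j}$ for $r \geq 1$. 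In particular, for $m \geq 1$,
\[
T(x^k)T(y^m) = \beta_k\beta_0\, x^{k-t} y^{l+m}, \qquad T(T(x^k)y^m) = \beta_k\, T(x^{k-t}y^{l+m}) = \beta_k\beta_{k-t}\, y^{(k-t) - a_{k-t} + l + m}.
\]
By~\eqref{Averaging} these must coincide, but the left side is a nonzero monomial containing $x^{k-t}$ with $k-t>0$ while the right side is a nonzero pure power of $y$, contradiction.

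With the claim in hand, a further application $T(x^k)T(y^m) = T(x^k T(y^m))$ extends the description to $T(x^k y^m) = \beta_k y^{m + k - a_k}$ for $0 < k < n$ and $m \geq 0$, which together with $T(y^m) = \beta_0 y^m$ is precisely $T_2$ when $n = \infty$. When $n < \infty$, writing $T(x^n) = \gamma x^n$ and comparing $T(x^n)T(y) = T(T(x^n)y) = T(x^n T(y))$ forces $\gamma = \beta_0$ as $T(x^n y) \neq 0$; propagating this periodically via the averaging identities $T(x^{sn})T(x^n)$ and $T(x^{sn})T(x^k)$ produces $T(x^{sn}) = \beta_0 x^{sn}$ and $T(x^{sn+k} y^m) = \beta_k x^{sn} y^{m+k-a_k}$ for $s \geq 0$, $0 \leq k < n$, $m \geq 0$, which is exactly the operator $T_1$.
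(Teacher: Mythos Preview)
Your proof is correct and follows essentially the same route as the paper's own proof, which likewise mirrors the proof of Theorem~\ref{RB_not_incr_and_ker=0} step by step, replacing uses of~\eqref{RBO} by~\eqref{Averaging}. The case split, the contradiction in Case~(A), the derivation of $\alpha=\beta$ in Case~(B), and the minimal-$k$ argument in Case~(C) all match the paper, with only cosmetic differences in how the computations are organized.
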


\begin{proof}
The proof of the statement is analogous to the one of Theorem~\ref{RB_not_incr_and_ker=0}.
We will carry out the proof step by step,
excepting technical details, and pay attention to the main points,
where the specifics of averaging operators is important.

{\sc Case 1}: $T(x) = \alpha y$ and $T(y) = \beta x$.
We arrive at a contradiction with that $T$ is monomial:
$T(xy) = T(x)T(x) / \alpha = \alpha y^2$,
$T(xy) = T(y)T(y) / \beta = \beta x^2$.

{\sc Case 2}: $T(x) = \alpha x$ and $T(y) = \beta y$.
Then $T(x^n) = \alpha x^n$ and $T(y^m) = \beta y^m$
for all $n, m > 0$.
Due to~\eqref{Averaging}, we find that $\alpha = \beta$ 
and $T$ coincides with $T_1$ for $n = 1$.

{\sc Case 3}: $T(x) = \beta_1 y$, $T(y) = \beta_0 y$.
Hence $T(y^m) = \beta_0 y^m$ for any $m > 0$.

{\sc Step 1}. 
We define 
$n = \min\{s\mid R(x^s) = \beta_n x^s,\,\, \beta_s \neq 0\}$
and put $n = \infty$ if the mentioned set is empty.

{\sc Step 2}.
We want to prove that if $k < n$ and $T(x^k) = \beta_k x^{k - t} y^l$
for some $0 < t \leqslant k$ and $0 \leqslant l \leqslant t$, then $t = k$.
To the contrary, let $k$ be a minimal number such that
$T(x^k) = \beta_k x^{k - t} y^l$, $0 < t < k$.
In this case we derive the formula similar to~\eqref{all_in_y_decrease_degreeformula}:
$T(x^s y^m) = \beta_s y^{m + s - a_s}$, where $0 < s < k$ and $0\leqslant m$.
Due to~\eqref{Averaging}, the expressions 
\begin{gather*}
T(x^k)T(y^m) = \beta_k \beta_0 x^{k - t} y^{m + l}, \quad
T(x^kT(y^m)) = \beta_0 T(x^k y^m), \\
T(T(x^k)y^m) = \beta_k T( x^{k - t} y^{m + l}) 
 = \beta_k \beta_{k - t} y^{m + l + k - t - a_{k - t}}
\end{gather*}
have to be equal, a contradiction to that $T$ is monomial.

{\sc Step 3}.
If $n < \infty$, then applying~\eqref{Averaging} for $T(x^n)$ and $T(y^m)$,
we obtain $\alpha_0 = \alpha_n$.
So, we have the operator $T_1$ when $n < \infty$ and $T_2$ when $n = \infty$. \end{proof}

As in \S4, we have a deep connection between Rota---Baxter and averaging operators.
For $\charr F = 0$, the analogue of Question~\ref{AdditionalProblem} 
stated for operators that do not contain monomials in the kernel has a positive answer to both a) and b).

\section{Operators on $F[x, y]$ and $F_0[x, y]$ without monomials in $\Sh$}

In this section, we study monomial operators~$L$ on $F[x, y]$ and $F_0[x, y]$ under the restriction similar to the one from $\S$5: $\Sh L = \emptyset$.
This condition implies that $L(x^n y^m) = \alpha_{n, m} x^n y^m$ 
for all $n, m \geqslant 0$ ($n + m > 0$ on $F_0[x,y]$).
Therefore, the relations~\eqref{RBO} and~\eqref{Averaging} for such~$L$
convert to a system of equations on the coefficients $\alpha_{n,m}$.

\begin{theorem} \label{AvOp_Case3.1_theorem}
Let $F$ be a field of characteristic zero
and let $R$ be an RB-operator on $F_0[x, y]$ or $F[x, y]$ defined as
$R(x^n y^m) = \alpha_{n, m} x^n y^m$, $\alpha_{n, m} \in F$, $n, m \geqslant 0$.

a) On $F[x, y]$, there exists only trivial RB-operator $R = 0$.

b) On $F_0[x, y]$, either there exists $\gamma \in F$ such that
for any $s,t \geqslant 0$, $s + t > 0$, we have
\begin{gather} \label{RBO_Case3_1_F0_solution_first}
\alpha_{s, t} =
\begin{cases}
    \gamma / a, & s = al, \, t = ar, \, a > 0, \\
    0, & \text{otherwise},
\end{cases}
\end{gather}
or there exist $k_1, k_2 > 0$, $\alpha_1, \alpha_2 \neq 0$,
and some $a, b ,d\geqslant 0$ such that
$d = \gcd (k_1, k_2)$, $a < d$, $b \mid d$, $d/b \mid \gcd (a, d)$, 
and for all $s, t \geqslant 0$, $s + t > 0$, we have
\begin{gather} \label{RBO_Case3_1_F0_solution_second}
\alpha_{s, t}
 = \begin{cases}
 \dfrac{\alpha_1\alpha_2}
 {(t / k_2)\alpha_1 + (s / k_1)\alpha_2},
 & s \equiv_{k_1} (k_1/b) l,\,t \equiv_{k_2} (k_2a/d)l,\, 0 \leqslant l < b,\\
 0, & \mbox{otherwise}.
\end{cases}
\end{gather}
\end{theorem}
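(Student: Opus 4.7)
The Rota--Baxter identity applied to $a=x^{n_1}y^{m_1}$ and $b=x^{n_2}y^{m_2}$ reduces immediately to the single scalar recurrence
\[
\alpha_{n_1,m_1}\alpha_{n_2,m_2}=\bigl(\alpha_{n_1,m_1}+\alpha_{n_2,m_2}\bigr)\,\alpha_{n_1+n_2,\,m_1+m_2},
\]
valid for every pair of indices allowed by the underlying algebra. Part~a) follows in one line: on $F[x,y]$ setting $(n_1,m_1)=(0,0)$ gives $\alpha_{0,0}\alpha_{n,m}=\alpha_{0,0}\alpha_{n,m}+\alpha_{n,m}^{2}$, whence $\alpha_{n,m}=0$ for every~$(n,m)$ (alternatively use Lemma~\ref{RB_R(1)_in_F}, which forces $R(1)=0$ and $\Imm R\subseteq\ker R$, and combine with the monomial form of~$R$).

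For part~b) on $F_0[x,y]$ I would work exclusively with the support $S=\{(n,m)\in\mathbb{N}^2\setminus\{(0,0)\}:\alpha_{n,m}\neq 0\}$. Feeding two indices of~$S$ into the recurrence rules out $\alpha_{n_1,m_1}+\alpha_{n_2,m_2}=0$ (else the right-hand side vanishes while the left does not), so $S$ is closed under addition and the function $\beta_{n,m}:=1/\alpha_{n,m}$ is \emph{additive} on~$S$. Feeding one index in~$S$ together with one outside~$S$ yields the \emph{saturation} property: if $s,w\in S$ and $w-s\in\mathbb{N}^2\setminus\{(0,0)\}$, then $w-s\in S$. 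Set $G:=\langle S\rangle_{\mathbb{Z}}\leqslant\mathbb{Z}^2$. If $S=\emptyset$ or $\mathrm{rank}\,G=1$, a minimality argument via closure and saturation shows $S=\{a(l,r):a\geqslant 1\}$ for some fixed $(l,r)\in\mathbb{N}^2$, and additivity of~$\beta$ gives $\alpha_{a(l,r)}=\gamma/a$ with $\gamma=\alpha_{l,r}$, matching~\eqref{RBO_Case3_1_F0_solution_first} (the empty-support case corresponds to $\gamma=0$). If $\mathrm{rank}\,G=2$, then $G\cap(\mathbb{Z}\times\{0\})=k_1\mathbb{Z}\times\{0\}$ and $G\cap(\{0\}\times\mathbb{Z})=\{0\}\times k_2\mathbb{Z}$ for unique $k_1,k_2>0$; writing any $(n,m)\in G\cap\mathbb{N}^2\setminus\{(0,0)\}$ as $w-v$ with $v,w$ nonnegative integer combinations of elements of~$S$, closure puts $v,w\in S\cup\{(0,0)\}$, and saturation applied to $w=v+(n,m)$ yields $(n,m)\in S$. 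This proves $S=G\cap\mathbb{N}^2\setminus\{(0,0)\}$ and in particular $(k_1,0),(0,k_2)\in S$.

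It remains to describe~$G$ concretely. The quotient $H:=G/(k_1\mathbb{Z}\times k_2\mathbb{Z})\leqslant\mathbb{Z}/k_1\times\mathbb{Z}/k_2$ has injective projections onto each factor (by minimality of~$k_1,k_2$), so $H$ is cyclic of some order $b\mid\gcd(k_1,k_2)=:d$. Choosing a generator of the form $(k_1/b,\,k_2a/d)$ and imposing that it has order exactly~$b$ in each factor produces the divisibility conditions $b\mid d$ and $d/b\mid\gcd(a,d)$, while its multiples $l\cdot(k_1/b,\,k_2a/d)$ for $0\leqslant l<b$ yield the congruences in~\eqref{RBO_Case3_1_F0_solution_second}. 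Since $G$ is torsion-free of rank~2, the additive~$\beta$ extends uniquely from~$S$ to a $\mathbb{Z}$-linear map on~$G$, necessarily of the form $(s,t)\mapsto s/(k_1\alpha_1)+t/(k_2\alpha_2)$ with $\alpha_1:=\alpha_{k_1,0}$, $\alpha_2:=\alpha_{0,k_2}$; inverting gives~\eqref{RBO_Case3_1_F0_solution_second}. The main technical obstacle I expect lies precisely in this last step --- translating the abstract structure of~$G$ into the explicit numerical parameters $(k_1,k_2,a,b,d)$, verifying that the listed divisibility constraints are both necessary and sufficient, and checking that one can always normalize $a$ into the range $0\leqslant a<d$.
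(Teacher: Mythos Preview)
Your argument is correct and takes a genuinely different, more structural route than the paper's. The paper proceeds by an explicit two-case split (``$\alpha_{0,t}=0$ for all $t$'' versus ``not''), and in the second case works inside the rectangle $[0,k_1)\times[0,k_2)$ through five computational steps, explicitly building the set $I^2$ of nonzero indices, then the set $J^2\subset\mathbb{Z}_d\times\mathbb{Z}_d$, and finally computing the coefficients one identity at a time via the formulas \eqref{RBO_Case3_c=ab/(a+b)}--\eqref{RBO_Case3_(n,m)_a+1_raz}. You replace all of this with three observations: (i)~the support $S$ is an additive sub-semigroup satisfying the saturation property; (ii)~the reciprocal $\beta=1/\alpha$ is additive on~$S$; (iii)~the rank of $G=\langle S\rangle_{\mathbb{Z}}$ governs the dichotomy. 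Your proof that $S=G\cap(\mathbb{N}^2\setminus\{0\})$ in the rank-two case is the key compression --- it replaces the paper's Steps~1--3 of Case~2 in one stroke --- and your identification of $H=G/(k_1\mathbb{Z}\times k_2\mathbb{Z})$ as a cyclic group with injective coordinate projections is exactly the conceptual content of the paper's Step~4. The payoff is a shorter and more transparent argument; the paper's approach, by contrast, yields the explicit formulas earlier and makes the verification that the listed operators really are Rota--Baxter more mechanical.

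Two small comments on your final step. First, ``order exactly $b$ in each factor'' is indeed correct (injectivity of both projections forces it), and this actually gives the \emph{equality} $\gcd(a,d)=d/b$, which is stronger than the divisibility $d/b\mid\gcd(a,d)$ stated in the theorem; since the theorem only asserts existence of parameters, your tighter condition is fine. Second, the extension of $\beta$ from $S$ to a $\mathbb{Z}$-linear map on $G$ is well-defined because any relation $\sum c_is_i=0$ in $G$ with $s_i\in S$ can be rewritten as an equality of two elements of $S$ (split into positive and negative parts), where additivity applies; you use characteristic zero only to pass from $\mathbb{Z}$-linearity on the rank-two lattice $G$ to the formula $(s,t)\mapsto s/(k_1\alpha_1)+t/(k_2\alpha_2)$ over~$F$, which is exactly where the paper needs it too.
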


\begin{proof}
Substituting $a = x^n y^m$ and $b = x^s y^t$ in~\eqref{RBO}, we find
\begin{equation} \label{RBO_Case3}
\alpha_{n,m}\alpha_{s,t} = (\alpha_{n,m} + \alpha_{s,t})\alpha_{n + s, m + t},
\quad n, m, s, t \geqslant 0.
\end{equation}
Then for $\alpha_{s, t} \neq 0$ and any $n , m \geqslant 0$, the following formulas hold:
\begin{gather} 
\alpha_{n + s, m + t} = 
\frac{\alpha_{n,m}\alpha_{s,t}}{\alpha_{n,m} + \alpha_{s,t}}, \label{RBO_Case3_c=ab/(a+b)}\\
\alpha_{n + as, m + at} =
\frac{\alpha_{s, t}\alpha_{n, m}}{\alpha_{s, t} + a \alpha_{n, m}}, 
\quad a \geqslant 0, \label{RBO_Case3_(n,m)_a+1_raz_generalization}\\
\alpha_{(a + 1)s, (a + 1)t} = \alpha_{s, t} / (a + 1), 
\quad a \geqslant 0. \label{RBO_Case3_(n,m)_a+1_raz}
\end{gather}
The formula~\eqref{RBO_Case3_c=ab/(a+b)} follows from~\eqref{RBO_Case3} directly.
One may prove~\eqref{RBO_Case3_(n,m)_a+1_raz_generalization} by induction on $a$,
and~\eqref{RBO_Case3_(n,m)_a+1_raz} coincides with~\eqref{RBO_Case3_(n,m)_a+1_raz_generalization} when $\alpha_{n, m} = \alpha_{s, t}$.
If $\alpha_{n,m} + \alpha_{s,t} = 0$, then by~\eqref{RBO_Case3} we have 
$\alpha_{n,m}\alpha_{s,t} = 0$, which contradicts $\alpha_{s,t} \neq 0$.
Therefore, none of the obtained formulas involve division by zero.

{\sc Case $F[x, y]$}.\,Putting $n = m = 0$ in~\eqref{RBO_Case3}, 
we obtain $\alpha_{s, t}^2 = 0$.
So, $\alpha_{s, t} = 0$~for~all~$s, t \geqslant 0$.

{\sc Case $F_0[x, y]$}.
Suppose that there exists a nonzero $\alpha_{s, t}$.
We will consider several cases.

{\sc Case 1}: $\alpha_{0, t} = 0$ for all $t > 0$.
The case when $\alpha_{t, 0} = 0$ for all $t > 0$ is considered analogously.
Define $\emptyset \neq I\subseteq \mathbb{N}$:
$$
I = \{ i  \mid \mbox{there exists $j$ such that } \alpha_{i, j} \neq 0\}.
$$
For any  $i \in I$ there exists a~minimal $k_i$ such that $\alpha_{i, k_i} \neq 0$.

{\sc Step 1}.
We find out indexes of all nonzero coefficients and 
prove that $k_{i + j} = k_i + k_j$ for all $i, j \in I$.
Considering~\eqref{RBO_Case3_c=ab/(a+b)} with $\alpha_{s, t} \neq 0$ and $\alpha_{0, m} = 0$,
we obtain $\alpha_{s, t + m} = 0$ for any $m > 0$. Thus,
$$
\alpha_{s, t} =
\begin{cases}
    \alpha_{s, k_s}, & s\in I,\,t = k_s,\\
    0, & \mbox{otherwise}.
\end{cases}
$$

If $\alpha_{n,m} = \alpha_{s,t} = 0$, then~\eqref{RBO_Case3} is the identity $0 = 0$.
If $\alpha_{n,m}, \alpha_{s,t} \neq 0$, then~$m = k_n$ and $t = k_s$,
and by~\eqref{RBO_Case3_c=ab/(a+b)} we find that $k_{n + s} = k_n + k_s$.
If $\alpha_{n, m} \neq 0$ and $\alpha_{s, t} = 0$, 
then~\eqref{RBO_Case3} implies 
$k_n + t = m + t \neq k_{n + s} = k_n + k_s$, which holds, since~$t \neq k_s$.

{\sc Step 2}.
Let $l$ be a minimal number from $I$ such that $\alpha_{l, k_l} \neq 0$.
Now we prove that for any $n \in I$, the equality $n k_l = l k_n$ holds.
Assume that $n k_l - l k_n > 0$.
Then by~\eqref{RBO_Case3_c=ab/(a+b)} and~\eqref{RBO_Case3_(n,m)_a+1_raz} we arrive at a~contradiction:
$$
0 \neq
\frac{\alpha_{l, k_l}}{n} \stackrel{\eqref{RBO_Case3_(n,m)_a+1_raz}}{=}
\alpha_{nl, nk_l} \stackrel{\eqref{RBO_Case3_c=ab/(a+b)}}{=} 
\frac{\alpha_{nl, lk_n}\alpha_{0, n k_l - l k_n}}
{\alpha_{nl, lk_n} + \alpha_{0, n k_l - l k_n}} =
0.
$$
If~$l k_n - n k_l > 0$, then we also get a~contradiction:
$$
0 \neq
\frac{\alpha_{n, k_n}}{l} =
\alpha_{ln, lk_n} =
\frac{\alpha_{ln, nk_l}\alpha_{0,lk_n - nk_l}}
{\alpha_{ln, nk_l} + \alpha_{0,lk_n - nk_l}}.
$$

{\sc Step 3}.
We prove the formula~\eqref{RBO_Case3_1_F0_solution_first}.
Let $(a,b)\in \mathbb{N}\times \mathbb{N} \setminus \{(0,0)\}$ be a~such pair that $ak_l = l b$.
Among all such pairs we take the one $(n,m)$ with a minimal value of~$n + m$.
If $l = n$, then
$$
\alpha_{s, t} =
\begin{cases}
    \alpha_{l , k_l} / a, & s = al, \, t = ak_l, \, a > 0, \\
    0, & \text{otherwise}.
\end{cases}
$$
If $n<l$, then there exists $r > 1$ such that $l = rn$ 
and for all $1 \leqslant c < r$ we have $\alpha_{cn, cm} = 0$.
Then by~\eqref{RBO_Case3_c=ab/(a+b)} we have $\alpha_{al + cn, ak_l + cm} = 0$ 
for all $0 < a$ and $1 \leqslant c < r$.
Thus, we have proved the formula~\eqref{RBO_Case3_1_F0_solution_first}
in each of the cases considered.

{\sc Case 2}:
$\alpha_{0, t} = 0$ and $\alpha_{s, 0} = 0$ not for all $s, t > 0$. 
Hence there exist minimal $k_1$ and~$k_2$
such that $\alpha_{k_1, 0}, \alpha_{0, k_2} \neq 0$.

{\sc Step 1}.
At this step, we will determine how nonzero coefficients~$\alpha_{n,m}$ are related.
The formulas~\eqref{RBO_Case3_c=ab/(a+b)} and~\eqref{RBO_Case3_(n,m)_a+1_raz} imply
\begin{gather}\label{RBO_Case3_setka_equation}
\alpha_{i k_1, j k_2} = 
\frac{\alpha_{k_1, 0}\alpha_{0, k_2}}
{j \alpha_{k_1, 0} + i\alpha_{0, k_2}}, 
\,\, i + j > 0, \,\, i, j \geqslant 0.
\end{gather}
Define the set
$\emptyset \neq I \subseteq \mathbb{N} \times \mathbb{N}$ as follows:
\begin{equation} \label{section_Sh=empty_def_set_I}
I^2 = \{ (i, j) \mid 0 < i < k_1, \, 0 < j < k_2, \, \alpha_{i, j} \neq 0\}.
\end{equation}
Applying~\eqref{RBO_Case3_c=ab/(a+b)} and~\eqref{RBO_Case3_setka_equation} for $(u, v)$, 
where $0\leqslant u<k_1$, $0\leqslant v<k_2$, $0 < u + v$, 
we obtain
\begin{gather}\label{RBO_Case3_sootvetstvie_v_kvadratah}
\alpha_{u, v} \neq 0\ 
\Longleftrightarrow\
\alpha_{i k_1 + u, j k_2 + v} \neq 0 
\text{ for all }
i, j \geqslant 0.
\end{gather}
Thus, the RB-operator will be completely determined if we find 
all pairs of indexes from~$I^2$, 
this is enough by~\eqref{RBO_Case3_sootvetstvie_v_kvadratah}.
If $I^2 = \emptyset$, then all nonzero coefficients are determined by~\eqref{RBO_Case3_setka_equation},
this formula is a~particular case of~\eqref{RBO_Case3_1_F0_solution_second}.

{\sc Step 2}.
At this step we describe $I^2$.
Suppose that $I^2 \neq \emptyset$.
Note that if $(s, t) \in I^2$, then
we have $(s + a, t) \not \in I^2$ and $(s, t + b) \not\in I^2$ 
for all $a, b$ such that $0 < s + a < k_1$, $0 < t + b < k_2$.
Let us prove that $(s + a, t)\not \in I^2$,
the proof of the other case is similar.
Due to minimality of $k_1$, we have $\alpha_{c, 0} = 0$ for every $0 < c < k_1$.
So, by~\eqref{RBO_Case3_c=ab/(a+b)} we obtain 
$$
\alpha_{s + a, t} = 
\frac{\alpha_{s, t} \alpha_{a, 0}}{\alpha_{s, t} + \alpha_{a, 0}}, 
\quad 0 < a < k_1.
$$
It is easy to see that by~\eqref{RBO_Case3_sootvetstvie_v_kvadratah}
we have proved the statement.
In other words, we have shown that if $(s, t) \in I^2$,
then $(s, r), (l, t) \not\in I^2$ for all $0 < r < k_2$, $r \neq t$, $0 < l < k_1$, $l \neq s$.

{\sc Step 3}.
Let us simplify the description of $I^2$.
We fix $d = \gcd (k_1, k_2)$ and denote $k_1 = dk_1^\prime$, $k_2 = dk_2^\prime$.
Now we prove that $I^2$ contains only pairs $(i, j)$ such that 
$i = ak_1^\prime$ and $j = bk_2^\prime$ for some $a,b\in \mathbb{N}$.
By the choice of $k_1$ and $k_2$ and by~\eqref{RBO_Case3_sootvetstvie_v_kvadratah},
if $\alpha_{ak_1, bk_2 + q} \neq 0$, where $0 \leqslant q < k_2$, 
then $q = 0$.
Analogously, if $\alpha_{ak_1 + c, bk_2} \neq 0$, where $0 \leqslant c < k_1$, then $c = 0$.
Consider a pair $(s, t) \in I^2$ and define $a = k_1 /\gcd (k_1, s)$, 
thus $as = k_1r$ for $r = s / \gcd (k_1, s)$.
Then by~\eqref{RBO_Case3_(n,m)_a+1_raz} we conclude that $k_2 \mid at$, 
hence $dk_2^\prime \mid dtk_1^\prime / \gcd (k_1, s)$ and so $k_2^\prime \mid t$.
Similarly, we get $k_1^\prime \mid s$.
Therefore, for any $(i, j) \in I^2$ there exist $a, b$ such that $(i, j) = (ak_1^\prime, bk_2^\prime)$.

{\sc Step 4}.
Now we describe the set $I^2$ directly.
Define the following set:
$$
J^2 = \{(a, b) \mid (ak_1^\prime, bk_2^\prime) \in I^2\}.
$$
The set $J^2\cup \{(0,0)\}$ determines a subgroup in $\mathbb{Z}_d \times \mathbb{Z}_d$.
Moreover, we get the following subgroups in $\mathbb{Z}_d$:
$$
A = \{s \mid (s, t) \in J^2 \}\cup \{0\}, \quad
B = \{t \mid (s, t) \in J^2 \}\cup \{0\}.
$$
By the Lagrange's theorem, $d_1, d_2\,|\,d$, where $d_1 = |A|$ and $d_2 = |B|$.

We have $J^2 \neq \emptyset$, since $I^2 \neq \emptyset$,
it implies that there exists $(d/d_1, a) \in J^2$, $0 \leqslant a < d$.
Hence
$$
J^2 = \{(sd/d_1, sa \!\!\! \mod \, d) \mid s = 1, \ldots, d_1 -1\}.
$$
Due to the definition of $J^2$, if $(s, t) \in J^2$, 
then $(sk_1^\prime, tk_2^\prime) \in I^2$, i.\,e.
$\alpha_{sk_1^\prime, tk_2^\prime} \neq 0$.
Let us show that $\gcd (a, d) = ed/d_1$ for some $e \geqslant 1$. 
Indeed, if $d/d_1$ does not divide $a$, then we arrive at a contradiction:
$$
0 \neq
\alpha_{d k_1^\prime/ d_1, a k_2^\prime} / d_1 \stackrel{\eqref{RBO_Case3_(n,m)_a+1_raz}}{=} 
\alpha_{d_1 k_1^\prime d / d_1, d_1 a k_2^\prime } = 
\alpha_{k_1^\prime d, d_1 a k_2^\prime} = 
\alpha_{k_1, d_1 a k_2^\prime} = 
0.
$$
The last equality holds, since 
$d_1 a k_2^\prime / dk_2^\prime = d_1 a / d \not \in \mathbb{Z}$
and so $d_1 a k_2^\prime \not\equiv_{dk_2^\prime} 0$.

{\sc Step 5}.
Finally, we prove the formula~\eqref{RBO_Case3_1_F0_solution_second}.
By~\eqref{RBO_Case3_(n,m)_a+1_raz}, we see that
$\alpha_{s k_1/d_1, s a k_2^\prime} = \alpha_{k_1/d_1, a k_2^\prime} / s$,  $0 < s < d_1$.
Further, we express the coefficient $\alpha_{k_1/d_1, a k_2^\prime}$
via $\alpha_{k_1, 0}, \alpha_{0, k_2}$ from the following relation:
$$
\frac{\alpha_{k_1, 0} \alpha_{0, k_2}}
{(a d_1/d)\alpha_{k_1, 0} + \alpha_{0, k_2}} \stackrel{\eqref{RBO_Case3_c=ab/(a+b)}}{=} 
\alpha_{k_1, a d_1 k_2 / d} = 
\alpha_{k_1, d_1 a_1 k_2^\prime} \stackrel{\eqref{RBO_Case3_(n,m)_a+1_raz}}{=} 
\alpha_{k_1/d_1, a k_2^\prime}/d_1, 
$$
so we find that 
$
\alpha_{k_1/d_1, a k_2^\prime} = \frac{d\alpha_{k_1, 0} \alpha_{0, k_2}}
{a\alpha_{k_1, 0} + (d/ d_1)\alpha_{0, k_2}}.
$
Applying this equality and~\eqref{RBO_Case3_setka_equation}, we obtain
\begin{multline*}
\alpha_{k_1 i + k_1/d_1 s, k_2 j + a k_2^\prime s} \stackrel{\eqref{RBO_Case3_c=ab/(a+b)}}{=}  
\alpha_{k_1/d_1 s, a k_2^\prime s}\alpha_{k_1i, k_2j} / 
(\alpha_{k_1/d_1 s, a k_2^\prime s} + \alpha_{k_1i, k_2j}) \\ = 
\frac{d\alpha_{k_1, 0} \alpha_{0, k_2}}{(a\alpha_{k_1,0} + d/d_1\alpha_{0, k_2})s} \cdot
\frac{ \alpha_{k_1, 0} \alpha_{0, k_2}}{j\alpha_{k_1, 0} + i\alpha_{0, k_2}} / 
\left(
\frac{d\alpha_{k_1, 0} \alpha_{0, k_2}}
{(a\alpha_{k_1, 0} + d/d_1\alpha_{0, k_2})s} 
+ 
\frac{ \alpha_{k_1, 0} \alpha_{0, k_2}}
{ j\alpha_{k_1, 0} + i\alpha_{0, k_2}}
\right) \\ =
\frac{\alpha_{k_1,0}\alpha_{0, k_2}}
{(j + a s / d)\alpha_{k_1,0} + (i + s / d_1)\alpha_{0, k_2}}.
\end{multline*}

Substituting the coefficients $\alpha_{n,m}$, $\alpha_{s,t}$ 
expressed by~\eqref{RBO_Case3_1_F0_solution_second} in~\eqref{RBO_Case3}, 
we obtain a trivial identity.
\end{proof}

\begin{remark}
Denote $z = x^{k_1/d_1}$ and $t = y^{k_2a/d}$.
Then the formula~\eqref{RBO_Case3_1_F0_solution_second} actually coincides
with the one from Example~5~\cite{MonomNonunital} devoted 
to the injective RB-operator of weight~0~on~$F_0[z,t]$.
\end{remark}

\begin{remark}
Let $L$ be a linear operator defined by formula~\eqref{RBO_Case3_1_F0_solution_second}.
It is an RB-operator provided that $\alpha_1 / \alpha_2 \not \in \mathbb{Q}_{<0}$.
\end{remark}

\begin{example}
Suppose that in Theorem~\ref{AvOp_Case3.1_theorem} $\alpha_{n, m} \neq 0$ for all $n, m \geqslant 0$.
Then the formula~\eqref{RBO_Case3_1_F0_solution_second} takes the following form:
$$
\alpha_{s, t} = \frac{\alpha_1\alpha_2}{t\alpha_1 + s\alpha_2}, \,\, s, t \geqslant 0, \,\, s + t > 0.
$$
\end{example}

\begin{example} \label{example_for_picture}
Take $k_1 = 16$, $k_2 = 12$, $a = b = 2$, $d = 4$.
So,~\eqref{RBO_Case3_1_F0_solution_second} has the following form:
$$
\alpha_{16 i + p, 12 j + q} = 
\begin{cases}
\dfrac{\alpha_1\alpha_2}
{(j + \frac{k}{2})\alpha_1 + (i + \frac{k}{2})\alpha_2}, 
& p = 8k,\, q = 6k,\, k \in \{0, 1\},\, i, j \geqslant 0, \\
0, & \mbox{otherwise}.
\end{cases}
$$
\end{example}

\begin{example}
Take $k_1 = 32$, $k_2 = 48$, $a = 12$, $b = 4$, $d = 16$.
Then~\eqref{RBO_Case3_1_F0_solution_second} may be rewritten as follows:
$$
\alpha_{s, t} = 
\begin{cases}
\dfrac{\alpha_1\alpha_2}
{\frac{t}{48}\alpha_1 + \frac{s}{32}\alpha_2},
& s \equiv_{32} 8k,\, t \equiv_{48} = 36k,\, 0 \leqslant k \leqslant 3, \\
0, & \mbox{otherwise}.
\end{cases}
$$
\end{example}

\begin{center}
\pgfplotsset
{
width = 14cm, 
height = 8cm, 
compat=1.15
}
\begin{tikzpicture}
\tikzstyle{every node}=[font=\small]
\begin{axis}
[
    axis x line = middle,
    axis y line = left,
    xtick={0,8,16,24,32},
    ytick={0,6,12,18,24,30,36},
    xmin = 0,
    ymin = 0,
    xmax = 36,
    ymax = 40,
    grid = major
]

\addplot[fill=ForestGreen!10!white,draw=none] 
coordinates {
(0, 0)
(0, 12)
(16, 12)
(16, 0)
};

\addplot
[
    mark=*,
    blue!65!white,
    mark size=3pt,
    only marks,
    point meta=explicit symbolic
]
coordinates 
{
    (0, 12)
    (0, 24)
    (0, 36)
    (8, 6)
    (8, 18)
    (8, 30)
    (16, 0)
    (16, 12)
    (16, 24)
    (16, 36)
    (24, 6)
    (24, 18)
    (24, 30)
    (32, 0)
    (32, 12)
    (32, 24)
    (32, 36)
};

\addplot [thick, lightgray]
coordinates 
{
    (8, -10)
    (8, 40)
};
\addplot [thick, lightgray]
coordinates 
{
    (24, -10)
    (24, 40)
};
\addplot [thick, lightgray]
coordinates 
{
    (0, 6)
    (36, 6)
};
\addplot [thick, lightgray]
coordinates 
{
    (0, 18)
    (36, 18)
};
\addplot [thick, lightgray]
coordinates 
{
    (0, 30)
    (36, 30)
};
\addplot [thick, black]
coordinates 
{
    (16, -10)
    (16, 50)
};
\addplot [very thick, black]
coordinates 
{
    (0, -10)
    (0, 39.45)
};
\addplot [very thick, black]
coordinates 
{
    (-10, 0)
    (35.75, 0)
};
\addplot [thick, black]
coordinates 
{
    (32, -10)
    (32, 50)
};
\addplot [thick, black]
coordinates 
{
    (0, 12)
    (50, 12)
};
\addplot [thick, black]
coordinates 
{
    (0, 24)
    (50, 24)
};
\addplot [thick, black]
coordinates 
{
    (0, 36)
    (50, 36)
};

\node [left, blue!65!white] at (3.7, 14) {$(0, 12)$};
\node [left, blue!65!white] at (11.2, 8) {$(8, 6)$};
\node [left, blue!65!white] at (11.7, 20) {$(8, 18)$};
\node [left, blue!65!white] at (19.7, 2) {$(16, 0)$};
\node [left, blue!65!white] at (20.3, 14) {$(16, 12)$};
\node [left, blue!65!white] at (27.7, 8) {$(24, 6)$};
\node [left, blue!65!white] at (28.3, 20) {$(24, 18)$};
\end{axis}
\end{tikzpicture}

Fig.~\arabic{pict}. Geometric interpretation of Example~\ref{example_for_picture}.
\end{center}

Let us consider a geometric interpretation of the action of the RB-operator~$R$ 
from Example~\ref{example_for_picture} on monomials (see Fig.~\arabic{pict}).
There is a one-to-one correspondence between points $(s,t)$ with $s,t\in \mathbb{N}$, $s+t>0$, and monomials $x^s y^t$. 
If $\alpha_{s, t} \neq 0$ for some point $(s,t)$, then we color this point blue.
Moreover, we have a grid defined by points $(s,t)$ such that $k_1\,|\,s$
and $k_2\,|\,t$. We mark the grid on Fig.~1 with black.
Parameters $a, b$, and $d$ determine points $(s,t)$ lying inside rectangles of the grid and satisfying $\alpha_{s,t}\neq 0$.
By the proof of Theorem~\ref{AvOp_Case3.1_theorem}, 
the location of the blue points in each of the grid rectangles is completely determined by that in the lower left rectangle, marked in pale green.

Now we consider an analogue of Theorem~\ref{AvOp_Case3.1_theorem} for averaging operators.

\begin{theorem} \label{AvOp_Case3.1_theorem_averaging}
Let $T$ be an averaging operator on $F_0[x, y]$  or $F[x, y]$
defined as  $T(x^n y^m) = \alpha_{n, m} x^n y^m$, $\alpha_{n, m} \in F$, $n, m \geqslant 0$.

a) On $F[x, y]$, either there exists $\gamma \in F$ such that
for any $s, t \geqslant 0$ we have
\begin{gather} \label{Averaging_Case3_1_F0_solution_first}
\alpha_{s, t} =
\begin{cases}
    \gamma, & s = al, \, t = ar, \, a \geqslant 0, \\
    0, & \text{otherwise},
\end{cases}
\end{gather}
or there exist $k_1, k_2 > 0$, $\gamma \neq 0$, 
and some $a, b, d \geqslant 0$ such that 
$d = \gcd (k_1, k_2)$, $ a < d$, $b \mid d$,  $d/b \mid \gcd (a, d)$, 
and for all $s, t \geqslant 0$ we have
\begin{gather} \label{Averaging_Case3_1_F0_solution_second}
\alpha_{s, t} = 
\begin{cases}
\gamma, & s \equiv_{k_1} (k_1/b) l,\,t \equiv_{k_2} (k_2a/d)l,\, 0 \leqslant l < b,\\
0, & \mbox{otherwise}.
\end{cases}
\end{gather}

b) On $F_0[x, y]$, we have the same two families of coefficients $\alpha_{s,t}$ as in a) with restriction $s + t > 0$ in both 
formulas~\eqref{Averaging_Case3_1_F0_solution_first},~\eqref{Averaging_Case3_1_F0_solution_second},
and also $a > 0$ in~\eqref{Averaging_Case3_1_F0_solution_first}.
\end{theorem}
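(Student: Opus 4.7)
My plan is to run a parallel analysis to the proof of Theorem~\ref{AvOp_Case3.1_theorem}, replacing the Rota---Baxter relation by its averaging counterpart. Substituting $a=x^{n}y^{m}$ and $b=x^{s}y^{t}$ into~\eqref{Averaging} yields the system
\begin{equation*}
\alpha_{n,m}\alpha_{s,t}=\alpha_{n,m}\alpha_{n+s,m+t}=\alpha_{s,t}\alpha_{n+s,m+t},
\end{equation*}
required for all admissible $(n,m),(s,t)$. Two structural consequences are immediate: if both $\alpha_{n,m}$ and $\alpha_{s,t}$ are nonzero, then comparing pairs among the three expressions forces $\alpha_{n,m}=\alpha_{s,t}=\alpha_{n+s,m+t}$, so every nonzero coefficient equals a common constant $\gamma\in F^{*}$ and the support $S=\{(n,m)\mid\alpha_{n,m}\neq 0\}$ is closed under addition; and if $\alpha_{n,m}\neq 0$ while $\alpha_{s,t}=0$, then $\alpha_{n+s,m+t}=0$, that is, $(n,m)+S^{c}\subseteq S^{c}$ for every $(n,m)\in S$. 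These are precisely the closure and absorption properties on the support that were extracted in the proof of Theorem~\ref{AvOp_Case3.1_theorem} from the RB relation~\eqref{RBO_Case3}.

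For the $F[x,y]$ setting I specialize to $n=m=0$ and obtain $\alpha_{0,0}\alpha_{s,t}=\alpha_{s,t}^{2}$, so each $\alpha_{s,t}$ lies in $\{0,\alpha_{0,0}\}$. If $S\neq\emptyset$, taking $(s,t)=(0,0)$ in the absorption condition for an arbitrary $(n,m)\in S$ would force $(n,m)\notin S$ unless $(0,0)\in S$; hence $\alpha_{0,0}=\gamma$, and this is the only difference from the $F_{0}[x,y]$ setting, where $(0,0)$ is simply absent and the parameter ranges are correspondingly restricted. The zero operator is included by taking $\gamma=0$ in~\eqref{Averaging_Case3_1_F0_solution_first}.

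The remaining task is the combinatorial classification of supports $S\subseteq\mathbb{N}^{2}$ (resp.\ $\mathbb{N}^{2}\setminus\{(0,0)\}$) satisfying the above closure and absorption properties. Since these are the same conditions that governed the supports in Theorem~\ref{AvOp_Case3.1_theorem}, the two-case analysis carried out there transfers verbatim: Case~1 yields the one-parameter monoid $S=\{(al,ar)\}$ leading to formula~\eqref{Averaging_Case3_1_F0_solution_first}, while Case~2, based on isolating minimal $k_{1},k_{2}$ with $\alpha_{k_{1},0},\alpha_{0,k_{2}}\neq 0$ and reducing the interior set $I^{2}$ to a subgroup of $\mathbb{Z}_{d}\times\mathbb{Z}_{d}$ generated by a pair $(d/b,a)$ subject to $b\mid d$ and $d/b\mid\gcd(a,d)$, produces~\eqref{Averaging_Case3_1_F0_solution_second}. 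The main technical step is that grid reduction from Steps~3--4 of the RB proof; here the averaging condition merely collapses the harmonic-mean fractions appearing on the RB side to the single constant $\gamma$, so no arithmetic of the RB proof is actually used beyond the support combinatorics. Sufficiency of both families is then verified by direct substitution into~\eqref{Averaging}, which reduces to the trivial identity $\gamma^{2}=\gamma^{2}=\gamma^{2}$ on the support and $0=0=0$ elsewhere.
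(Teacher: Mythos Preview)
Your proposal is correct and follows essentially the same route as the paper: derive the scalar relations $\alpha_{n,m}\alpha_{s,t}=\alpha_{n,m}\alpha_{n+s,m+t}=\alpha_{s,t}\alpha_{n+s,m+t}$, observe that all nonzero coefficients equal a common $\gamma$ and that the support obeys the same closure and absorption properties as in Theorem~\ref{AvOp_Case3.1_theorem}, handle $\alpha_{0,0}$ separately for $F[x,y]$, and then import the support classification from the RB proof verbatim. Your observation that the RB argument uses only the support combinatorics (the harmonic-mean values never feed back into determining which $\alpha_{s,t}$ vanish) is exactly what makes the transfer legitimate, and the paper relies on the same point, just less explicitly.
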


\begin{proof}
Like in the proof of Theorem~\ref{Averaging_not_incr_and_ker=0},
we will follow the steps from the proof of Theorem~\ref{AvOp_Case3.1_theorem}
avoiding technical details.
We consider both algebras $F_0[x, y]$ and $F[x, y]$ simultaneously.

Substituting $a = x^n y^m$ and $b = x^s y^t$ in~\eqref{Averaging}, we obtain the following recurrence relations:
\begin{equation} \label{Averaging_case3_analogue_eq1}
\alpha_{n,m}\alpha_{s,t} = 
\alpha_{n,m}\alpha_{n + s, m + t} =
\alpha_{s,t}\alpha_{n + s, m + t},
\quad n, m, s, t \geqslant 0.
\end{equation}
Let $\alpha_{n,m}$ and $\alpha_{s,t}$ be any nonzero coefficients, 
then we derive by~\eqref{Averaging_case3_analogue_eq1} that $\alpha_{n,m} = \alpha_{s,t}$.
So, all nonzero coefficients are equal.
Denote by $\gamma$ the value of all nonzero coefficients, i.\,e.
if $\alpha_{s,t}\neq0$, then $\alpha_{s, t} = \gamma$.

Applying~\eqref{Averaging_case3_analogue_eq1} for $\alpha_{s, t} \neq 0$,
we get the formulas similar 
to~\eqref{RBO_Case3_(n,m)_a+1_raz_generalization} and~\eqref{RBO_Case3_(n,m)_a+1_raz}:
\begin{gather} \label{Averaging_case3_analogue_eq2}
\alpha_{n + as, m + at} = \alpha_{n, m}, \quad 
\alpha_{(a + 1)s, (a + 1)t} = \alpha_{s, t}, \quad 
a \geqslant 0. 
\end{gather}

Unlike the proof of Theorem~\ref{AvOp_Case3.1_theorem}, 
we need to take into account the coefficient $\alpha_{0, 0}$ 
in the case of the $F[x, y]$.
All other arguments will remain unchanged for both algebras $F_0[x,y]$ and $F[x,y]$.
Substituting $n = m = 0$ in~\eqref{Averaging_case3_analogue_eq1},
we obtain $\alpha_{0, 0} \alpha_{s, t} = \alpha_{s, t}^2$.
If $\alpha_{0, 0} = 0$, then we get $T = 0$.
If $\alpha_{0, 0} \neq 0$, then $\alpha_{0, 0} = \gamma$ and both 
formulas~\eqref{Averaging_Case3_1_F0_solution_first},~\eqref{Averaging_Case3_1_F0_solution_second} include this case.

Suppose that there exists a nonzero $\alpha_{s, t}$.

{\sc Case 1}: $\alpha_{0, t} = 0$ for all $t > 0$.
Now we define a nonempty set $I\subseteq \mathbb{N}$:
$$
I = \{ i  \mid \mbox{there exists $j$ such that } \alpha_{i, j} \neq 0\}.
$$
For any $i \in I$, there exists a minimal $k_i$ such that $\alpha_{i, k_i} \neq 0$.

{\sc Step 1}.
Due to~\eqref{Averaging_case3_analogue_eq1} 
we conclude that $k_{s + t} = k_s + k_t$ for all $s, t \in I$.

{\sc Step 2}.
Consider a~minimal $l \in I$ such that $\alpha_{l, k_l} \neq 0$.
Let us prove that $nk_l = lk_n$ holds for any $n \in I$.
Assume to the contrary that $nk_l \neq lk_n$.
First, we consider $nk_l > lk_n$ and arrive at a contradiction:
$$
0 \neq
\alpha_{l, k_l} 
\stackrel{\eqref{Averaging_case3_analogue_eq2}}{=} 
\alpha_{nl, nk_l} 
\stackrel{\eqref{Averaging_case3_analogue_eq1}}{=} 
\frac{\alpha_{nl, lk_n}\alpha_{0, nk_l - lk_n}}{\alpha_{nl, lk_n}} = 
0.
$$
Second, we consider the case $nk_l <lk_n$ and again arrive at a contradiction:
$$
0 \neq
\alpha_{n, k_n} =
\alpha_{ln, l k_n} =
\frac{\alpha_{ln, nk_l}\alpha_{0, lk_n - nk_l}}{\alpha_{ln, nk_l}} = 
0.
$$

{\sc Step 3}.
We consider pairs $(a,b)\in \mathbb{N}\times \mathbb{N} \setminus \{(0,0)\}$ 
such that $ak_l = l b$.
Similarly to the proof of Theorem~\ref{AvOp_Case3.1_theorem},
formula~\eqref{Averaging_Case3_1_F0_solution_first} can be derived
from~\eqref{Averaging_case3_analogue_eq1} and~\eqref{Averaging_case3_analogue_eq2}. 

{\sc Case 2}: $\alpha_{0, t} = 0$ and $\alpha_{s, 0} = 0$
not for all $s, t > 0$. Hence there exist minimal $k_1$ and~$k_2$
such that $\alpha_{k_1, 0}, \alpha_{0, k_2} \neq 0$.
As noted in the beginning of the proof, $\alpha_{k_1, 0} = \alpha_{0, k_2} = \gamma$.

{\sc Step 1}.
As in the proof of Theorem~\ref{AvOp_Case3.1_theorem}, 
we have the formula
\begin{gather}\label{Averaqing_Case3_setka_equation}
\alpha_{i k_1, j k_2} =  \gamma,
\,\, i + j > 0, \,\, i, j \geqslant 0.
\end{gather}
We also introduce the set~$I$ defined by the formula~\eqref{section_Sh=empty_def_set_I}.
Applying~\eqref{Averaging_case3_analogue_eq2} and~\eqref{Averaqing_Case3_setka_equation} for $(u, v)$, 
where $0\leqslant u<k_1$, $0\leqslant v<k_2$, $0 < u + v$, 
we obtain
\begin{gather}\label{Averaging_Case3_sootvetstvie_v_kvadratah}
\alpha_{u, v} \neq 0\ 
\Longleftrightarrow\
\alpha_{i k_1 + u, j k_2 + v} \neq 0 
\text{ for all }
i, j \geqslant 0.
\end{gather}
In order to prove the formula~\eqref{Averaging_case3_analogue_eq1},
it is again enough to determine the set~$I^2$.
If $I^2 = \emptyset$, then we describe all nonzero coefficients 
by~\eqref{Averaqing_Case3_setka_equation},
which is a particular case of~\eqref{Averaging_Case3_1_F0_solution_second}.

{\sc Step 2}.
Suppose that $I^2 \neq \emptyset$.
As in the case of RB-operators, we may prove that if $(s, t) \in I^2$, 
then $(s + a, t) \not \in I^2$ and $(s, t + b) \not\in I^2$
for all $a, b$ such that $0 < s + a < k_1$, $0 < t + b < k_2$.

{\sc Step 3}.
We repeat the arguments of the corresponding step from the proof of Theorem~\ref{AvOp_Case3.1_theorem}.
We also define a~parameter $d = \gcd (k_1, k_2)$ and show that for any pair $(i, j) \in I^2$ we have $(i, j) = (a k^\prime_1, b k^\prime_2)$,
where $a, b \geqslant 0$ and $k_i = dk^\prime_i$, $i = 1, 2$.

{\sc Step 4}.
We define the set $J^2$ as subgroup of $\mathbb{Z}_d \times \mathbb{Z}_d$.
Further, we define $d_1$ and $a$ as numbers such that 
$d_1 \mid d$ and $d/d_1 \mid \gcd (a, d)$.

{\sc Step 5}.
We have already proved that all nonzero coefficients are pairwise equal,
so the relations~\eqref{Averaging_case3_analogue_eq1} hold.
Hence, the formula~\eqref{Averaging_Case3_1_F0_solution_second} has been proved.
\end{proof}

For $\charr F = 0$, the analogue of Question~\ref{AdditionalProblem}
stated for averaging operators and RB-operators on $F_0[x, y]$ satisfying the conditions of 
Theorems~\ref{AvOp_Case3.1_theorem} and~\ref{AvOp_Case3.1_theorem_averaging}
respectively, has a~positive answer for both a) and b).
In particular, we can construct all monomial RB-operators satisfying the condition 
$\Sh = \emptyset$ by corresponding monomial averaging operators.
On $F[x, y]$, the analogue of Question~\ref{AdditionalProblem} 
for these sets of operators is positive for a) and negative for~b),
since there exists only zero such RB-operator on $F[x,y]$.

\section{Conclusion}

We have seen in sections $\S$4--6 that there is a deep connection between monomial averaging operators and RB-operators.
In case of polynomials in one variable, we can construct any RB-operator
by some averaging operator.
However, the situation changes when we consider polynomials with two variables.
Although the sets of all monomial averaging operators and RB-operators on $F_0[x,y]$ ($F[x,y]$) 
have more significant differences than on $F_0[x]$ ($F[x]$), 
their subsets determined by some natural conditions may (in some sense) coincide.
As an example of such a restriction, we consider non-increasing in degree operators
whose kernel does not contain monomials.
However, as we can see in the case of operators satisfying the condition $\Sh = \emptyset$, the set of monomial averaging operators is in some sense larger than the set of monomial Rota---Baxter ones. 
Anyway, we suggest the following method of the classification of RB-operators on a given algebra: it may be very useful to start the problem with description of averaging operators. In addition, averaging operators satisfy a larger number of relations, and this simplifies their description.
An open question is to find the connection between Rota---Baxter operators of weight zero and averaging ones outside the context of monomiality.

\section*{Acknowledgments}

Author is grateful to his supervisor Vsevolod Gubarev for very valuable discussions
and to Maxim Goncharov, who read the first draft of this paper and pointed out several errors.

The study was supported by a grant from the Russian Science Foundation \linebreak
\textnumero 23-71-10005, https://rscf.ru/project/23-71-10005/

\end{document}